\newtheorem{corollary}{Corollary}
\newtheorem{assumption}{Assumption}
\newtheorem{remark}{Remark}
\newtheorem{lemma}{Lemma}
\newtheorem{theorem}{Theorem}
\newcommand{\BEASN}{\begin{eqnarray*}}
\newcommand{\EEASN}{\end{eqnarray*}}
\newcommand{\BEAS}{\begin{eqnarray}}
\newcommand{\EEAS}{\end{eqnarray}}
\newcommand{\BEQ}{\begin{equation}}
\newcommand{\EEQ}{\end{equation}}
\newcommand{\BIT}{\begin{itemize}}
\newcommand{\EIT}{\end{itemize}}
\newcommand{\eg}{{e.g.}}
\newcommand{\ie}{{i.e.}}
\newcommand{\nn}{{\nonumber}}
\newcommand{\ones}{\bm{1}}
\newcommand{\reals}{\mathbb{R}}
\newcommand{\expect}{\mathbb{E}}
\newcommand{\bmx}{\bm{x}}
\newcommand{\bmu}{\bm{u}}
\newcommand{\bmy}{\bm{y}}
\newcommand{\bmz}{\bm{z}}
\newcommand{\bme}{\bm{e}}
\newcommand{\grad}{\nabla}
\newcommand{\grade}{\breve{\bm{g}}}
\newcommand{\reg}{\bm{Reg}}
\newcommand{\wm}{\bm{P}}
\newcommand{\bigo}{\mathcal{O}}
\newcommand{\breg}{V}
\newcommand{\calK}{\mathcal{K}}
\newcommand{\calG}{\mathcal{G}}
\newcommand{\calV}{\mathcal{V}}
\newcommand{\calE}{\mathcal{E}}
\newcommand{\pr}{\textcolor[rgb]{0.00,0.00,0.00}}
\newcommand{\dy}{\textcolor[rgb]{0.00,0.00,0.00}}
\begin{document}
\title{Distributed Mirror Descent for Online Composite Optimization}
\author{Deming~Yuan,~\IEEEmembership{Member,~IEEE},~Yiguang~Hong,~\IEEEmembership{Fellow,~IEEE},
~Daniel~W.~C.~Ho,~\IEEEmembership{Fellow,~IEEE},~and~Shengyuan~Xu
\thanks{D. Yuan and S. Xu are with the School of Automation, Nanjing University of Science and Technology, Nanjing 210094, China (e-mail: dmyuan1012@gmail.com; syxu@njust.edu.cn).}
\thanks{Y. Hong is with the Key Laboratory of Systems and Control, Academy of Mathematics and Systems Science, Chinese Academy of Sciences, Beijing 100190, China (e-mail: yghong@iss.ac.cn).}
\thanks{D. W. C. Ho is with the Department of Mathematics, City University of Hong Kong, Kowloon, Hong Kong (e-mail: madaniel@cityu.edu.hk).}
}

\maketitle

\begin{abstract}
In this paper, we consider an online distributed composite optimization problem over a time-varying multi-agent network that consists of multiple interacting nodes, where the objective function of each node consists of two parts: a loss function that changes over time and a regularization function. This problem naturally arises in many real-world applications ranging from wireless sensor networks to signal processing. We propose a class of online distributed optimization algorithms that are based on approximate mirror descent, which utilize the Bregman divergence as distance-measuring function that includes the Euclidean distances as a special case. We consider two standard information feedback models when designing the algorithms, that is, full-information feedback and bandit feedback. For the full-information feedback model, the first algorithm attains \dy{an average regularized regret of order $\mathcal{O}(1/\sqrt{T})$ with the total number of rounds $T$}. The second algorithm, which only requires the information of the values of the loss function at two predicted points instead of the gradient information, achieves the same average regularized regret as that of the first algorithm. Simulation results of a distributed online regularized linear regression problem are provided to illustrate the performance of the proposed algorithms.
\end{abstract}
\begin{IEEEkeywords}
Online distributed optimization, composite objective, average regularized regret, approximate mirror descent, bandit feedback.
\end{IEEEkeywords}

\section{Introduction}
In recent years, there have been considerable research efforts on distributed multi-agent optimization,
due to its widespread applications in machine learning, sensor networks, smart grids, and distributed control systems. In distributed multi-agent optimization, each node is endowed with a local private objective function, and the main task of the network is to collectively minimize the sum of the objective functions of nodes by local computations and communications
\cite{nedic2010,ram2010,zhu2011tac,zhao2017tac,lin2017tac,lu2011tac,hong2014tac,yang2017tac}.

There exist many efficient distributed optimization algorithms in the literature. In the seminal work \cite{nedic2010}, the authors proposed the notable distributed projected subgradient algorithm for solving distributed constrained multi-agent optimization problem and provided its convergence analysis results. To establish non-asymptotic convergence results, Duchi et al.\cite{duchi2011} proposed a distributed optimization algorithm that is based on dual averaging, and characterized its explicit convergence rate. The works\cite{xi2014arxiv,rabbat2015,deming2018auto} developed a class of distributed optimization algorithms that are built on mirror descent, which generalize the projection step by using the Bregman divergence. Different from the aforementioned works that deal only with non-composite objective functions, the authors in \cite{shi2015tsp,zeng2017scl} considered a decentralized composite optimization problem where the local objective function of every node is composed of a smooth function and a nonsmooth regularizer. This problem naturally arises in many real applications including distributed estimation in sensor networks \cite{hosseini2013cdc,yuan2016siam}, distributed quadratic programming \cite{shi2015tsp}, and distributed machine learning \cite{jakovetic2014tac,mateos2010}, to name a few. It is worth emphasizing that the objective functions considered in the aforementioned works are \emph{time-invariant}. However, in many real applications the objective functions change over time, due to the dynamically changing and uncertain nature of the environment, taking the distributed estimation in sensor networks as an example \cite{hosseini2013cdc}. Online optimization is known as a powerful tool that can deal with time-varying cost functions that satisfy certain properties (see, \eg, \cite{duchi2010colt,flaxman2005,agarwal2010,shamir2017,zinkevich2003}).
The work \cite{zinkevich2003} considered online convex optimization where the objective function varies over time, and introduced a notion of regret to measure the performance of online optimization algorithms. Based on \cite{zinkevich2003}, a class of bandit online optimization algorithms were proposed in \cite{flaxman2005,agarwal2010,shamir2017} to remove the need for gradient information of the objective functions.

Building on the distributed optimization model in \cite{nedic2010}, in this paper we focus on solving distributed composite optimization problem in \emph{online} setting over a time-varying network. Specifically, the problem is as the following:
\dy{
\BEQ
\begin{array}{lll}
\mathop{\min}_{\bmx\in\calK}     &  & \sum\limits_{t=1}^{T}\sum\limits_{i=1}^{m} f_{i,t} (\bmx)  \\
\mathrm{where}  &  & f_{i,t} (\bmx) := \ell_{i,t} (\bmx) + r(\bmx)
\end{array}
\label{Online-CO}
\EEQ
where $\ell_{i,t} : \reals^d \rightarrow \reals$ is the convex loss (or cost) function associated with node $i$ at time $t$, $r$ is the convex and possibly nonsmooth regularization function, and $\calK \subseteq \reals^d$ is the convex constraint set (or decision space) known to all the nodes in the network.
}



Recently, there have been increasing research interests in solving distributed convex optimization in online setting \cite{hosseini2013cdc,hosseini2016tac,jadbabaie2018tac,lee2017tac,yan2013tkde,raginsky2011acc}. In the work\cite{yan2013tkde}, the authors developed a distributed autonomous online learning algorithm that is based on computing local subgradients, and they derived an \dy{$\mathcal{O}(\ln T/T)$ average regret} rate for strongly convex cost functions. On the other hand, the work \cite{hosseini2013cdc} extended the distributed dual averaging algorithm in \cite{duchi2011} to online setting, and derived an \dy{$\mathcal{O}(1/\sqrt{T})$ average regret} rate. The authors in \cite{hosseini2016tac} further applied the online distributed dual averaging algorithm to dynamic networks. The authors in \cite{jadbabaie2018tac} proposed an online distributed optimization that is based on mirror descent and established its convergence analysis results.



Our goal in the paper is to design distributed algorithms for solving problem (\ref{Online-CO}) under full-information feedback and bandit feedback that attain non-trivial performance, respectively. For each feedback model, we devise a distributed algorithm with sub-linear cumulative regularized regret. Our algorithms are simple and naturally involve in each round a local approximate mirror descent step and a communication and averaging step where nodes aim at aligning their decisions to those of their neighbors. The average regularized regret analysis of our algorithms is however challenging as it requires us to understand how these two steps interact.
In particular, this paper aims at establishing average regularized regrets that are comparable to those of the centralized algorithms. Specifically, the contributions of the paper can be summarized in three directions.

\BIT
\item[$\bullet$]
First, we have developed two mirror descent based distributed online algorithms that generalize the standard gradient descent based algorithms. In particular, we analyze approximate versions of distributed online algorithms where the decisions at every round are not the exact minimizer of the corresponding optimization problem but approximate ones; this is commonly observed in iterative optimization problems, since in general they cannot be solved to infinite precision. We have also highlighted the dependence of the average regret bound on the optimization errors. Different from the work \cite{hosseini2016tac}, our algorithms are non-Euclidean, in the sense that they enable us to generate more efficient updates by carefully choosing the Bregman divergence. Different from the work \cite{jadbabaie2018tac}, our optimization problem is composite and the proposed algorithms are approximate.
\item[$\bullet$]
Second, in the case of full-information feedback, we develop an online distributed composite mirror descent ($\mathsf{ODCMD}$) algorithm and show that \dy{the average regularized regret of algorithm $\mathsf{ODCMD}$ scales as $\mathcal{O}(1/\sqrt{T})$}. Algorithm $\mathsf{ODCMD}$ extends the algorithm in \cite{duchi2010colt} to distributed multi-agent setting, and in particular, the \dy{$\mathcal{O}(1/\sqrt{T})$ average regret} scaling is identical to those of centralized online optimization algorithms proposed in \cite{zinkevich2003,duchi2010colt}.
Moreover, this rate of convergence is the same as that of \cite{hosseini2016tac}, where the objective function is non-composite and the algorithm is gradient descent based. Different from the works \cite{shi2015tsp,chen2012allerton,zeng2017scl}, where the objective functions are time-invariant and the algorithm is gradient descent based, algorithm $\mathsf{ODCMD}$ is online and based on mirror descent.
\item[$\bullet$]
Third, we further solve the online distributed composite optimization under bandit feedback. New challenges arise in the absence of gradient information when designing the algorithm, however, we remedy this by introducing a distributed gradient estimator that only needs two point observations of the loss function.
\pr{
Specifically, we propose a bandit online distributed composite mirror descent ($\mathsf{BanODCMD}$) algorithm and show that the average regularized regret scales as $\mathcal{O}( p(d) d / \sqrt{T})$, where $p(d)$ is a constant that depends on the norm used. In the case of Euclidean norm, we have $p(d) = 1$ and the average regularized regret scales as $\mathcal{O}(d)$ with the dimension $d$, which is identical to that of centralized online bandit optimization \cite{flaxman2005}. In the case of $\ell_p$ norm with $p\geq 1$, we have $p(d) \leq \sqrt{d}$ and algorithm $\mathsf{BanODCMD}$ achieves an average regret scaling as $\mathcal{O}( d \sqrt{d} / \sqrt{T})$, which is a factor of $\sqrt{d}$ better than that obtained for a centralized algorithm in \cite{agarwal2010}.
}
Different from existing algorithms \cite{hosseini2016tac,jadbabaie2018tac,lee2017tac,yan2013tkde}, the proposed algorithm $\mathsf{BanODCMD}$ removes the need for gradient information of the loss functions. This makes our algorithm applicable to cases where the gradient information is unavailable or costly to access.
\EIT

The remainder of this paper is organized as follows: In Section II, we give a description of the notations and assumptions used throughout the paper. In Section III, we propose the $\mathsf{ODCMD}$ algorithm that solves the problem under full-information feedback and derive its average regularized regret rate. In Section IV, to solve the problem under bandit feedback, we propose the $\mathsf{BanODCMD}$ algorithm and establish its average regularized regret rate. We numerically evaluate the convergence performance of $\mathsf{ODCMD}$ and $\mathsf{BanODCMD}$ on a distributed online regularized linear regression problem in Section V. Finally, we conclude with Section VI.

\emph{Notation and Terminology:} Let $\reals^{d}$ be the $d$-dimensional vector space. Write $\left< \bm{a},\bm{b} \right>$ to denote the standard inner product on $\reals^{d}$, for any $\bm{a},\bm{b}\in\reals^d$. Write $\| \bmx \|_2$ to denote the Euclidean norm of a vector $\bmx\in\reals^d$. Denote $\| \cdot \|_\ast$ the dual norm to $\| \cdot \|$, defined by $\| \bm{x} \|_\ast = \max_{\| \bm{y}  \| = 1} \left< \bm{x},\bm{y}\right>$.
For a vector $\bmx\in\reals^d$, let $[\bmx]_i$ be the $i$th entry of $\bmx$. Write $[m]$ to denote the set of integers $\{1,\ldots,m\}$. Let $\Delta_d$ be the probability simplex in $\reals^{d}$, \ie, $\Delta_d = \{ \bmx \in \reals^{d} \mid \sum_{i=1}^{d} [\bmx]_i = 1, [\bmx]_i\geq 0,i\in[d]\}$. For a matrix $\bm{P}$, use $[\bm{P}]_{ij}$ to denote the entry of $i$th row and $j$th column. Use the notation $\nabla f(\bmx)$ to refer to any (sub-)gradient of $f$ at point $\bmx$. Given two positive sequences $\{a_t\}_{t=1}^{\infty}$ and $\{b_t\}_{t=1}^{\infty}$, write $a_t = \mathcal{O} (b_t)$ if $\lim\sup_{t\rightarrow\infty} \frac{a_t}{b_t}  < \infty$. Write $\expect[X]$ to denote the expected value of a random variable $X$.

\section{Problem Setting}
\subsection{The Problem}
In this paper we focus on solving problem (\ref{Online-CO}) over a time-varying network $\calG_t = (\calV,\calE_t)$, where $\calV = \{ 1,\ldots,m\}$ is the node set and $\calE_t = \{  \{i,j\} \mid [\wm(t)]_{ij} > 0, i,j\in\calV \}$ is the set of activated edges at time $t$ with $\wm(t) = [\wm(t)]_{ij}\in\reals^{m\times m}$ defined to be the weight matrix that represents the communication pattern of the network at time $t$.

The function $r$ in problem (\ref{Online-CO}) serves as a fixed \emph{regularization function} of each node and is typically used to promote certain structure types in the solution of the problem or control the complexity of the solution. Examples of the regularization term $r$ include: i) $\ell_1$-regularization: $r(\bmx) = \lambda \| \bmx\|_1$ for some $\lambda>0$, which can be used to promote the sparsity of the solution in distributed estimation in sensor networks \cite{yuan2016siam,hosseini2013cdc}; and ii) mixed regularization: $r(\bmx) = \frac{\sigma}{2} \| \bmx\|_2^2 + \lambda \| \bmx\|_1$ for some $\sigma > 0$ and $\lambda>0$, which is used in distributed elastic net regression problem \cite{rabbat2015}.

In general, the network of nodes interacts with the environment according to the following protocol. To be specific, at round $t=1,2,3,\ldots$,
\BIT
\item[$\bullet$] Node $i\in\calV$ makes a decision $\bmx_{i,t}\in\calK$;
\item[$\bullet$] The environment selects the loss function $\ell_{i,t}$, and node $i$ receives a signal about the loss function $\ell_{i,t}$;
\item[$\bullet$] Node $i$ communicates the information with its instant neighbors.
\EIT

The objective of every node $i\in\calV$ in the network is to generate a sequence of estimates $\{ \bmx_{i,t} \}_{t=1}^{T} \in \calK$ that minimizes the \emph{average regularized regret} over $T$ rounds, defined by:
\dy{
\BEAS
\overline{\reg}_{i}(T) &:=&  \frac{1}{T} \sum_{t=1}^{T} \sum_{j=1}^{m}  \left( \ell_{j,t} (\bmx_{i,t}) + r(\bmx_{i,t})  \right) \nn\\
&&- \frac{1}{T} \sum_{t=1}^{T} \sum_{j=1}^{m} \left( \ell_{j,t} (\bmx^\star) + r(\bmx^\star) \right)
\label{regularized-regret}
\EEAS
where
$\bmx^\star  =  \mathop{\arg\min}_{\bmx\in\calK}  \sum_{t=1}^{T} \sum_{j=1}^{m} \left( \ell_{j,t} (\bmx) + r(\bmx) \right).$ The average regularized regret measures the difference between the average loss of every node $i$'s decisions $\{ \bmx_{i,t} \}_{t=1}^{T}$ and the average loss of the best constant decision $\bmx^\star$ chosen in hindsight.
}

\subsection{Feedback Models}
This paper considers the following two different information feedback models in solving problem (\ref{Online-CO}):
\BIT
\item[$\bullet$]
\emph{Full-information feedback:} After each node $i$ has committed to the decision $\bmx_{i,t}$, a loss function $\ell_{i,t}$ along with its entire information is revealed to node $i$; in particular, each node $i$ can use the gradient information of its loss function $\ell_{i,t}$ to construct the next decision.
\item[$\bullet$]
\emph{Bandit feedback:} Only the value of the loss function $\ell_{i,t}$ at (or near) the committed decision is revealed to each node $i$, no other information about $\ell_{i,t}$ is revealed to node $i$.
\EIT

\subsection{Assumptions}
Throughout, we make the following standard assumptions on the network $\calG_t$ and the associated weight matrix $\wm(t)$.
\begin{assumption}\label{assump:network}
The network $\calG_t = (\calV,\calE_t)$ and the weight matrix $\wm(t)$ satisfy the following.
\BIT
\item[(a)]
$\wm(t)$ is doubly stochastic for all $t\geq 1$, that is, $\sum_{j=1}^{m} [\wm(t)]_{ij} = 1$ and $\sum_{i=1}^{m} [\wm(t)]_{ij} = 1$, for all $i,j\in\calV$.
\item[(b)]
There exists a scalar $\zeta > 0$ such that $[\wm(t)]_{ii} \geq \zeta$ for all $i$ and $t\geq 1$, and $[\wm(t)]_{ij} \geq \zeta$ if $\{i,j\}\in\calE_t$.
\item[(c)]
There exists an integer $B \geq 1$ such that the graph $(\calV,\calE_{kB+1} \cup \cdots \cup \calE_{(k+1)B})$ is strongly connected for all $k\geq 0$.
\EIT
\end{assumption}

\pr{
The network model in Assumption \ref{assump:network} is widely used in distributed multi-agent optimization community (see, e.g., \cite{nedic2010,yuan2016siam,lee2017tac}). It is easy to achieve in a distributed setting. For example, when bidirectional communication between nodes is allowed, Assumption 1(a) (i.e., doubly stochasticity) follows by enforcing symmetry on the node interaction matrix. Assumption 1(b) requires that each node assigns a significant weight to its own decision and those of its neighbors. Assumption 1(c) simply states that the network is frequently connected, but need not be connected at every time instant. Assumption 1 includes the fixed and connected network as a special case, by taking $B=1$.
}

We make the following assumptions on the constraint set and functions in problem (\ref{Online-CO}).
\begin{assumption}\label{assump-set-bounded}
The decision space $\calK$ has a finite diameter, that is, $\forall \bmx,\bmy\in\calK$, $ \|  \bmx - \bmy \| \leq D_{\calK}$.
\end{assumption}

\begin{assumption}\label{assump-fcn-obj}
\pr{
Function $\ell_{i,t}$ ($i\in[m]$ and $t\in[T]$) is $G_\ell$-Lipschitz over $\calK$, that is,
\BEASN
| \ell_{i,t}(\bmx)  -  \ell_{i,t}(\bmy) | \leq G_\ell \| \bmx - \bmy\|, \quad \forall \bmx,\bmy\in\calK.
\EEASN
This, in fact, implies that the (sub-)gradient of $\ell_{i,t}$ is uniformly bounded by the same constant $G_\ell$, that is, $\| \grad \ell_{i,t} (\bmx) \|_\ast \leq G_\ell$, for all $\bmx\in\calK$. In addition, function $r(\bmx)$ is $G_r$-Lipschitz over $\calK$.
}
\end{assumption}

\begin{remark}
It is worth noting that Assumption \ref{assump-set-bounded} is standard in solving distributed or even centralized online convex optimization problems (see, \eg, \cite{agarwal2010,yan2013tkde,lee2017tac}).
In Assumption \ref{assump-fcn-obj}, $\ell_{i,t}(\bmx)$ and $r(\bmx)$ are guaranteed to be Lipschitz continuous over $\mathcal{K}$, if we assume that the decision space $\mathcal{K}$ is compact.
\end{remark}

In this paper, we aim at developing \emph{mirror descent based} algorithms for solving problem (\ref{Online-CO}), which utilize the Bregman divergence as the distance-measuring function. Bregman divergences are a general class of distance-measuring functions, which include the standard Euclidean distance and Kullback-Leibler divergence as special cases. Moreover, mirror descent can generate adaptive updates to better reflect the geometry of the underlying constraint set, by carefully choosing the Bregman divergence; taking the unit simplex constraint set as an example, in this case the Bregman divergence is chosen as the Kullback-Leibler divergence (see, \eg, \cite{duchi2010colt,deming2018auto}).

Let $\omega: \reals^d \rightarrow \reals$ be a distance-measuring function, and define the \emph{Bregman divergence} associated with function $\omega$ as follows:
\BEAS
\breg_\omega(\bmx,\bmy) := \omega(\bmx) - \omega(\bmy) - \left< \nabla \omega(\bmy) , \bmx - \bmy \right>.
\EEAS

We make the following assumptions on the distance-measuring function $\omega$ and the associated Bregman divergence $\breg_\omega$.
\begin{assumption}\label{assump-fcn-phi}
Function $\omega$ is $\sigma_\omega$-strongly convex with respect to a norm $\| \cdot\|$ on the set $\calK$, that is,
\BEASN
\omega(\bmx) \geq \omega(\bmy) + \left< \grad\omega(\bmy), \bmx-\bmy \right> + \frac{\sigma_\omega}{2} \| \bmx - \bmy \|^2, \quad \forall \bmx,\bmy\in\calK
\EEASN
and has $G_\omega$-Lipschitz gradients on the set $\calK$,
\BEASN
\| \grad \omega(\bmx) - \grad \omega(\bmy) \|_\ast &\leq& G_\omega \| \bmx - \bmy \|  , \quad \forall \bmx,\bmy\in\calK.
\EEASN
\end{assumption}

\begin{assumption}\label{assump-breg}
The Bregman divergence $\breg_\omega$ is convex in its second argument $\bmy$ for every fixed $\bmx$, that is,
\BEASN
\breg_\omega \left(\bmx, \sum_{i=1}^d [\bm{\alpha}]_i \bmy_i \right)
&\leq&  \sum_{i=1}^d [\bm{\alpha}]_i \breg_\omega \left(\bmx, \bmy_i \right), \quad\forall \bm{\alpha}\in \Delta_d.
\EEASN
\end{assumption}

\begin{remark}
Assumptions 4 and 5 are standard in the literature on distributed mirror descent algorithms (see, \eg, \cite{xi2014arxiv,jadbabaie2018tac,deming2018auto}). As a simple example, they are satisfied when choosing the distance-measuring function as $\omega(\bmx) = \frac{1}{2} \|\bmx\|_2^2$, and the associated Bregman divergence is $\breg_\omega \left(\bmx,\bmy \right) = \frac{1}{2} \|\bmx - \bmy\|_2^2$.
\pr{
In addition, the Bregman divergence associated with the $\ell_p$ norm squared, \ie, $\omega(\bmx) = \frac{1}{2} \|\bmx\|_p^2$ with $p\in(1,2]$, satisfies the constraint in Assumption \ref{assump-breg}; another example is the Kullback-Leibler divergence that utilizes $\omega(\bm{x}) = \sum_{i=1}^d [\bm{x}]_i \ln ([\bm{x}]_i)$ as the distance-generating function (see \cite{bauschke2001}).
}
\end{remark}

\section{Full-Information Feedback Model}
This section focuses on solving online distributed composite optimization under full-information feedback. We first propose our algorithm and then establish its main convergence analysis results.
\subsection{Algorithm $\mathsf{ODCMD}$}
We now propose algorithm $\mathsf{ODCMD}$, which utilizes the Bregman divergence as distance-measuring functions, rather than the standard Euclidean distance employed by most approaches. The pseudo-code of the algorithm is presented in Algorithm \ref{alg-dcmd}.

\begin{algorithm}
\caption{$\mathsf{ODCMD}$: Online Distributed Composite Mirror Descent}
\label{alg-dcmd}
\begin{algorithmic}[1]
\REQUIRE a step size $\eta$, the optimization error sequence $\{\rho_t\}_{t=1}^{T}$
\ENSURE $\bmx_{i,1} = \arg\min_{\bmx\in\calK} \omega(\bmx), i\in[m]$
\FOR{$t=1$ to $T$}
\STATE
Node $i$ predicts $\bmx_{i,t} \in \calK$ and receives $\grad \ell_{i,t} (\bmx_{i,t})$
\STATE
Node $i$ computes a \emph{$\rho_t$-approximate} solution $\bmy_{i,t} \in \calK$ to the following optimization problem:
\BEASN
\mathop{\arg\min}_{\bmx\in\calK} \left< \grad_{i,t}, \bmx \right>  + r(\bmx) + \frac{1}{\eta} \breg_\omega (\bmx,\bmx_{i,t} )
\EEASN
where $\grad_{i,t} = \grad \ell_{i,t} (\bmx_{i,t})$
\STATE
Node $i$ updates $\bmx_{i,t+1}$ by communicating with its instant neighbors
\BEASN
\bmx_{i,t+1} &=& \sum_{j=1}^{m} [\wm(t)]_{ij} \bmy_{j,t}  \label{alg-ocmd-3}
\EEASN
\ENDFOR
\end{algorithmic}
\end{algorithm}

In step 3 in Algorithm \ref{alg-dcmd}, a $\rho_t$-approximate solution $\bmy_{i,t} \in \calK$ is computed in the following sense:
\BEASN
&&\left< \grad_{i,t}, \bmy_{i,t} \right> + r(\bmy_{i,t})+\frac{1}{\eta} \breg_\omega (\bmy_{i,t},\bmx_{i,t}) \nn\\
&\leq&  \left< \grad_{i,t} , \bmy_{i,t}^\star \right>   +  r(\bmy_{i,t}^\star) + \frac{1}{\eta} \breg_\omega (\bmy_{i,t}^\star,\bmx_{i,t})  + \rho_t
\EEASN
where
\BEASN
\bmy_{i,t}^\star &=& \mathop{\arg\min}_{\bmx\in\calK} \left< \grad_{i,t}, \bmx \right>  + r(\bmx) + \frac{1}{\eta} \breg_\omega (\bmx,\bmx_{i,t} ).
\EEASN

Note that the optimization problem arising at step 3 of $\mathsf{ODCMD}$ is only required to be solved \emph{approximately} (up to an additive error $\rho_t$). This is commonly observed in iterative optimization problems, since in general they cannot be solved to infinite precision. Moreover, the approximate computation of the optimization problem induces a sequence of errors that complicates the average regularized regret rate analysis of the algorithm.

\subsection{Main Convergence Results}
We first establish a theorem characterizing the main convergence results of Algorithm \ref{alg-dcmd}.
\begin{theorem}\label{corollary-main-cvx}
Let Assumptions 1--5 hold, and \pr{the vectors $\{  \bmx_{i,t} \}_{t=1}^{T} $ be generated by Algorithm \ref{alg-dcmd}}. Then, for all $T\geq 1$ and any $j\in[m]$,
\dy{
\BEASN
\overline{\reg}_j(T) &\leq& \frac{\mathsf{A}_0}{T} + \mathsf{A}_1 \frac{1}{\eta T} + \mathsf{A}_2 \eta + \frac{\mathsf{A}_3}{T} \sum_{t=1}^{T} \sqrt{\eta \rho_t} \nn\\
&& + \frac{\mathsf{A}_4}{T} \sum_{t=1}^{T} \sqrt{\frac{\rho_t}{\eta}}
\EEASN
}
where
\BEASN
\mathsf{A}_0 &=&  \frac{ 2 \vartheta }{1-\kappa} \left( G_\ell + G_r \right) \left(\sum_{i=1}^{m} \| \bmx_{i,1} \| \right) \nn\\
\mathsf{A}_1 &=&  \sum_{i=1}^{m} \breg_\omega (\bmx^\star , \bmx_{i,1}) \nn\\
\mathsf{A}_2 &=&  \frac{m}{\sigma_\omega} \left(  \frac{1}{2} G_\ell^2 + G_r (G_\ell + G_r) +  \frac{2 \vartheta}{1-\kappa} (G_\ell + G_r)^2 \right) \nn\\
\mathsf{A}_3 &=& m \sqrt{\frac{2}{\sigma_\omega}}  \left(  G_\ell +   \frac{ 2 \vartheta }{1-\kappa} \left( G_\ell + G_r \right) \right)  \nn\\
\mathsf{A}_4 &=& 2 m \sqrt{\frac{2}{\sigma_\omega}} G_\omega D_\calK
\EEASN
with $\vartheta = \big(1-\frac{\zeta}{4m^2}\big)^{-2}$ and $\kappa = \big(1-\frac{\zeta}{4m^2}\big)^{\frac{1}{B}}$.
\end{theorem}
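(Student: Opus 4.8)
\emph{Proof plan.} The plan is to fix the node $j$ whose regret is under study and to bound, for every loss index $i\in[m]$ and round $t$, the instantaneous gap $\ell_{i,t}(\bmx_{j,t})+r(\bmx_{j,t})-\ell_{i,t}(\bmx^\star)-r(\bmx^\star)$ by splitting it into a \emph{network-disagreement} part and a \emph{local mirror-descent} part. Since the gradient $\grad_{i,t}$ is available only at node $i$'s own prediction $\bmx_{i,t}$, I would first move the evaluation point from $\bmx_{j,t}$ to $\bmx_{i,t}$ using the $(G_\ell+G_r)$-Lipschitz continuity of $\ell_{i,t}+r$, and then apply convexity of $\ell_{i,t}$ at $\bmx_{i,t}$, which gives
\BEASN
&& \ell_{i,t}(\bmx_{j,t})+r(\bmx_{j,t})-\ell_{i,t}(\bmx^\star)-r(\bmx^\star) \\
&\leq& (G_\ell+G_r)\|\bmx_{j,t}-\bmx_{i,t}\| + \langle\grad_{i,t},\bmx_{i,t}-\bmx^\star\rangle + r(\bmx_{i,t})-r(\bmx^\star).
\EEASN
Using $\|\bmx_{j,t}-\bmx_{i,t}\|\leq\|\bmx_{j,t}-\bar{\bmx}_t\|+\|\bmx_{i,t}-\bar{\bmx}_t\|$ with $\bar{\bmx}_t:=\frac1m\sum_k\bmx_{k,t}$, the problem separates into a disagreement term and a per-node mirror-descent term.

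For the disagreement I would introduce the transition matrix $\Phi(t,s)=\wm(t)\wm(t-1)\cdots\wm(s)$ and unroll the update $\bmx_{i,t+1}=\sum_k[\wm(t)]_{ik}\bmy_{k,t}$ as a perturbed consensus with increment $\bme_{k,t}:=\bmy_{k,t}-\bmx_{k,t}$. Double stochasticity makes the mean evolve by $\bar{\bmx}_{t+1}=\bar{\bmx}_t+\frac1m\sum_k\bme_{k,t}$, so subtraction and the geometric-mixing estimate $|[\Phi(t,s)]_{ik}-\frac1m|\leq\vartheta\kappa^{t-s}$ (valid under Assumption~1 with exactly the stated $\vartheta,\kappa$) yield, schematically, $\|\bmx_{i,t}-\bar{\bmx}_t\|\leq\vartheta\kappa^{t-1}\sum_k\|\bmx_{k,1}\|+\vartheta\sum_{s=1}^{t-1}\kappa^{t-s}\sum_k\|\bme_{k,s}\|$. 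The increment is controlled by the $(\sigma_\omega/\eta)$-strong convexity of the proximal objective together with the Lipschitz bounds: the exact minimizer obeys $\|\bmy_{k,t}^\star-\bmx_{k,t}\|\leq\eta(G_\ell+G_r)/\sigma_\omega$, while the $\rho_t$-approximation plus strong convexity gives $\|\bmy_{k,t}-\bmy_{k,t}^\star\|\leq\sqrt{2\eta\rho_t/\sigma_\omega}$, so $\|\bme_{k,t}\|\leq\eta(G_\ell+G_r)/\sigma_\omega+\sqrt{2\eta\rho_t/\sigma_\omega}$. Summing over $t$ and exchanging the order of summation (so that $\sum_t\sum_{s\leq t}\kappa^{t-s}\leq 1/(1-\kappa)$) turns the initial term into $\mathsf{A}_0$, the descent piece into the $\frac{2\vartheta}{1-\kappa}(G_\ell+G_r)^2$ contribution of $\mathsf{A}_2$, and the approximation piece into the $\frac{2\vartheta}{1-\kappa}(G_\ell+G_r)$ contribution of $\mathsf{A}_3$.

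For the mirror-descent term I would use the first-order optimality of $\bmy_{i,t}^\star$ with a subgradient in $\partial r(\bmy_{i,t}^\star)$ and the three-point identity for $\breg_\omega$ to obtain $\langle\grad_{i,t},\bmy_{i,t}^\star-\bmx^\star\rangle+r(\bmy_{i,t}^\star)-r(\bmx^\star)\leq\frac1\eta[\breg_\omega(\bmx^\star,\bmx_{i,t})-\breg_\omega(\bmx^\star,\bmy_{i,t}^\star)-\breg_\omega(\bmy_{i,t}^\star,\bmx_{i,t})]$. Converting the left side back to the prediction $\bmx_{i,t}$ costs $(G_\ell+G_r)\|\bmx_{i,t}-\bmy_{i,t}^\star\|$; absorbing the $G_\ell$ part against the negative $-\frac{\sigma_\omega}{2\eta}\|\bmy_{i,t}^\star-\bmx_{i,t}\|^2$ produces $\frac{\eta G_\ell^2}{2\sigma_\omega}$ and bounding the $G_r$ part by the displacement estimate produces $\frac{\eta G_r(G_\ell+G_r)}{\sigma_\omega}$, i.e. the first two pieces of $\mathsf{A}_2$. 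To telescope, I would swap the exact $\breg_\omega(\bmx^\star,\bmy_{i,t}^\star)$ for the approximate $\breg_\omega(\bmx^\star,\bmy_{i,t})$ and then invoke Assumption~5 with double stochasticity, giving $\sum_i\breg_\omega(\bmx^\star,\bmx_{i,t+1})\leq\sum_i\breg_\omega(\bmx^\star,\bmy_{i,t})$, so that $\frac1\eta\sum_t\sum_i[\breg_\omega(\bmx^\star,\bmx_{i,t})-\breg_\omega(\bmx^\star,\bmx_{i,t+1})]$ telescopes to $\mathsf{A}_1/\eta$. The swap error is handled with $\breg_\omega(\bmx^\star,\bmy_{i,t})-\breg_\omega(\bmx^\star,\bmy_{i,t}^\star)=\breg_\omega(\bmy_{i,t}^\star,\bmy_{i,t})+\langle\grad\omega(\bmy_{i,t}^\star)-\grad\omega(\bmy_{i,t}),\bmx^\star-\bmy_{i,t}^\star\rangle$, which the $G_\omega$-Lipschitz gradient and diameter $D_\calK$ bound by $\frac{G_\omega}{2}\|\bmy_{i,t}-\bmy_{i,t}^\star\|^2+G_\omega D_\calK\|\bmy_{i,t}-\bmy_{i,t}^\star\|$; divided by $\eta$ this gives exactly the $\sqrt{\rho_t/\eta}$ term of $\mathsf{A}_4$, the residual $\bigo(\rho_t)$ being of lower order. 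Finally, the Lipschitz-gradient conversion $\langle\grad_{i,t},\bmy_{i,t}-\bmy_{i,t}^\star\rangle\leq G_\ell\sqrt{2\eta\rho_t/\sigma_\omega}$ incurred when passing between the approximate update and the exact minimizer supplies the pure $G_\ell$ piece of $\mathsf{A}_3$. Summing over $i,t$, dividing by $T$, and collecting constants reproduces the claimed bound.

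The hard part will be the interaction between the averaging step and the approximate proximal step: the communication update mixes the \emph{approximate} points $\bmy_{k,t}$, whereas the only clean progress inequality holds at the \emph{exact} minimizers $\bmy_{k,t}^\star$, so one must swap between the two inside the telescoping Bregman sum. Controlling this swap is delicate precisely because the resulting gap is divided by $\eta$, which is what creates the $\sqrt{\rho_t/\eta}$ term and its blow-up as $\eta\to0$; keeping this error additive, rather than letting it corrupt either the telescope or the consensus estimate, is the crux of the argument. A secondary difficulty is the careful bookkeeping of the mixing factor $\vartheta/(1-\kappa)$ so that the disagreement contributions land in the correct constants $\mathsf{A}_0,\mathsf{A}_2,\mathsf{A}_3$ after the order of summation is exchanged.
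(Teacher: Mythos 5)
Your plan follows essentially the same route as the paper's proof: the paper likewise splits the regret into a per-node mirror-descent inequality (first-order optimality at $\bmy_{i,t}^\star$, the three-point identity for $\breg_\omega$, Fenchel--Young, and telescoping via Assumption~5 plus double stochasticity) and a perturbed-consensus disagreement bound with the mixing estimate $|[\wm(t:\tau)]_{ij}-\tfrac1m|\le\vartheta\kappa^{t-\tau}$, controlling the increments by exactly your two displacement bounds $\|\bmy_{i,t}^\star-\bmx_{i,t}\|\le\eta(G_\ell+G_r)/\sigma_\omega$ and $\|\bmy_{i,t}-\bmy_{i,t}^\star\|\le\sqrt{2\eta\rho_t/\sigma_\omega}$ (its Lemmas~1 and~2). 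The only difference is cosmetic bookkeeping: the paper converts the inner product to the approximate point $\bmy_{i,t}$ \emph{before} applying the three-point identity (yielding the $\tfrac{2}{\eta}G_\omega D_\calK\|\bmy_{i,t}-\bmy_{i,t}^\star\|$ term directly), whereas you apply it at $\bmy_{i,t}^\star$ and swap afterwards, which produces the same $\sqrt{\rho_t/\eta}$ term up to a harmless lower-order $\bigo(\rho_t)$ residual.
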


It can be seen from Theorem \ref{corollary-main-cvx} that the average regularized regret of Algorithm \ref{alg-dcmd} relies on the properties of the parameters $\eta$ and $\rho_t$. In particular, we have the following corollary that characterizes the average regularized regret for every node in terms of the total number of rounds $T$, under specific choices of $\eta$ and $\rho_t$.
\begin{corollary}\label{corollary-main-cvx-2}
Under the conditions of Theorem \ref{theorem-basic-conv}, and taking
\BEASN
\eta =  \frac{1}{\sqrt{T}}, \qquad \qquad \rho_t = \bigo\left( \frac{1}{t^{3/2}} \right),
\quad t\in[T]
\EEASN
we have that, for all $T\geq 1$ and any $j\in[m]$,
\dy{
\BEASN
\overline{\reg}_j(T) &=& \bigo\left( 1/\sqrt{T} \right).
\EEASN
}
\end{corollary}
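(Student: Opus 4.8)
The plan is to invoke Theorem \ref{corollary-main-cvx} directly and substitute the prescribed schedules $\eta = 1/\sqrt{T}$ and $\rho_t = \bigo(t^{-3/2})$ into the five-term upper bound, then verify term by term that the right-hand side collapses to $\bigo(1/\sqrt{T})$. The first thing to record is that the coefficients $\mathsf{A}_0,\ldots,\mathsf{A}_4$ are all constants independent of $T$: each is built only from the Lipschitz constants $G_\ell, G_r, G_\omega$, the strong-convexity modulus $\sigma_\omega$, the diameter $D_\calK$, the number of nodes $m$, and the mixing quantities $\vartheta = (1-\zeta/(4m^2))^{-2}$ and $\kappa = (1-\zeta/(4m^2))^{1/B}$. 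Since $\kappa\in(0,1)$ under Assumption \ref{assump:network}, the factor $1/(1-\kappa)$ is finite, and $\mathsf{A}_1 = \sum_{i}\breg_\omega(\bmx^\star,\bmx_{i,1})$ is finite because $\calK$ has finite diameter and $\omega$ has $G_\omega$-Lipschitz gradient. Hence it suffices to track the $T$-dependence of each term.

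With $\eta = 1/\sqrt{T}$ the first three terms are immediate: $\mathsf{A}_0/T = \bigo(1/T)$, $\mathsf{A}_1/(\eta T) = \mathsf{A}_1/\sqrt{T}$, and $\mathsf{A}_2\eta = \mathsf{A}_2/\sqrt{T}$, so the second and third already realize the target rate. For the two error-dependent terms I would reduce everything to the partial sum $S_T := \sum_{t=1}^{T} t^{-3/4}$. Writing $\rho_t \leq c\,t^{-3/2}$ for a constant $c$, one has $\sqrt{\eta\rho_t} \leq \sqrt{c}\,T^{-1/4}t^{-3/4}$ and $\sqrt{\rho_t/\eta} \leq \sqrt{c}\,T^{1/4}t^{-3/4}$, so the fourth term is bounded by $(\mathsf{A}_3\sqrt{c}/T)\,T^{-1/4}S_T$ and the fifth by $(\mathsf{A}_4\sqrt{c}/T)\,T^{1/4}S_T$.

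The only quantitative step is bounding $S_T$. Comparing the sum with the integral $\int_0^{T} x^{-3/4}\,dx = 4T^{1/4}$ (equivalently, the standard estimate $\sum_{t=1}^{T} t^{-\alpha} = \bigo(T^{1-\alpha})$ for $\alpha\in(0,1)$) gives $S_T = \bigo(T^{1/4})$. Feeding this back, the fourth term is $\bigo(T^{-1}\cdot T^{-1/4}\cdot T^{1/4}) = \bigo(1/T)$ and the fifth term is $\bigo(T^{-1}\cdot T^{1/4}\cdot T^{1/4}) = \bigo(1/\sqrt{T})$. Summing the five contributions, the dominant ones are the second, third, and fifth, each of order $1/\sqrt{T}$, while the first and fourth are $\bigo(1/T)$; therefore $\overline{\reg}_j(T) = \bigo(1/\sqrt{T})$ for every $j\in[m]$. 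The calculation is routine, and the only real \emph{content} is that the exponent $3/2$ in $\rho_t$ is precisely the threshold at which the optimization error does not degrade the rate: the binding constraint is the fifth term $\frac{\mathsf{A}_4}{T}\sum_{t}\sqrt{\rho_t/\eta}$, which carries the amplifying factor $T^{1/4}$ coming from $1/\sqrt{\eta}$. Any schedule with $\rho_t$ decaying slower than $t^{-3/2}$ would make $S_T$ grow faster and push this term above $1/\sqrt{T}$, so the (mild) obstacle is simply checking that the prescribed decay is fast enough to keep term five at $\bigo(1/\sqrt{T})$, which the $3/2$ exponent guarantees exactly.
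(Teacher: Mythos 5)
Your proposal is correct and follows essentially the same route as the paper: substitute $\eta = 1/\sqrt{T}$ and $\rho_t = \bigo(t^{-3/2})$ into the five-term bound of Theorem~\ref{corollary-main-cvx} and control $\sum_{t=1}^{T} t^{-3/4} \leq 4T^{1/4}$ by integral comparison, which is precisely the inequality the paper records. Your additional observations --- that the fifth term is the binding one and that the exponent $3/2$ is exactly the threshold for not degrading the rate --- are accurate and consistent with the paper's Remark~3, but go beyond what the paper's own (terser) proof states.
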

\begin{proof}
The desired result follows by combining the results in Theorem \ref{corollary-main-cvx}, the specific choices of $\eta$ and $\rho_t$, and the following inequality:
\BEASN
\sum_{t=1}^{T} \sqrt{\frac{1}{t^{3/2}}} &=& 1 + \sum_{t=2}^{T} \frac{1}{t^{3/4} } \nn\\
&\leq&  1 + \int_{u=1}^{T} \frac{1}{u^{3/4} } \mathrm{d}u \nn\\
&\leq&  4 T^{1/4}.
\EEASN
Therefore, the proof is complete.
\end{proof}


\begin{remark}
It would be of interest to investigate the optimal choice of the step size $\eta$. To be specific, we consider the error-free case, that is, $\rho_t = 0$ for all $t\in[T]$. Suppose that
\BEASN
\eta =  \frac{c_\eta}{\sqrt{T}},\qquad\qquad c_\eta >0
\EEASN
and combine this with the bound in Theorem \ref{corollary-main-cvx}, it is easy to see that
\dy{
\BEASN
\overline{\reg}_j(T) &\leq& \frac{\mathsf{A}_0}{T} + \left(  \frac{\mathsf{A}_1}{c_\eta}  + \mathsf{A}_2 c_\eta \right) \frac{1}{\sqrt{T}}.
\EEASN
}
It follows from some simple algebra that the optimal choice of $c_\eta$ is $c_\eta^{\star} = \sqrt{\mathsf{A}_1/\mathsf{A}_2}$.
Moreover, the upper bound on average regularized regret in Theorem 1 depends on parameter $B$, which represents the connectivity of the underlying network topology. It is easy to see that the upper bound on average regularized regret gets smaller for a network with better connectivity, which corresponds to smaller $B$.
\end{remark}

\begin{remark}
To the best of our knowledge, our proposed algorithm $\mathsf{ODCMD}$ is the first algorithm that utilizes the composite mirror descent to solve online distributed composite optimization problem and establishes explicit average regret rate results. In particular, Corollary \ref{corollary-main-cvx} shows that the \dy{average regularized regret of $\mathsf{ODCMD}$ is $\bigo( 1/\sqrt{T} )$}, matching that of the previously known centralized algorithm \cite{duchi2010colt}. Different from the algorithms in \cite{shi2015tsp,chen2012allerton,zeng2017scl}, $\mathsf{ODCMD}$ is an online mirror descent based algorithm that utilizes the Bregman divergence as the distance-measuring function, instead of the Euclidean distances; moreover, we only require the loss function to be Lipschitz continuous. Different from the work \cite{jadbabaie2018tac}, the objective function of our problem is composite that includes a regularization function, and we only require the optimization step to be solved approximately. \dy{Furthermore, our proposed algorithm can be applied to a more generalized model of (\ref{Online-CO}) that allows different regularization functions at different nodes.}
It is interesting to consider what happens if the error sequence $\{ \rho_t \}_{t=1}^{T}$ is not summable. For instance, if the error $\rho_t$ decreases as $\bigo\left( \frac{1}{t} \right)$, then the \dy{average regularized regret in Corollary \ref{corollary-main-cvx-2} is $\bigo\left( 1/T^{1/4} \right)$}.
\dy{
In fact, we can achieve the same average regularized regret scaling by choosing fixed optimization error, provided that the error scales as $\mathcal{O} (1/T^{3/2})$.
}
\end{remark}

\subsection{\dy{Discussions}}
\dy{
In this subsection, we illustrate the advantages of using general distance-generating functions $\omega(\bm{x})$ over the standard Euclidean distance, by providing the following two case studies.
}

\dy{
{\it Online Distributed Entropic Descent:}
Let $\mathcal{K} = \Delta_d$ (the probability simplex in $\mathbb{R}^d$), $\rho_t = 0$ for all $t\in[T]$ and $r(\bm{x}) = 0$. The distance-generating function is chosen as $\omega(\bm{x}) = \sum_{i=1}^{d} [\bm{x}]_i \ln ([\bm{x}]_i)$ and the associated Bregman divergence is $V_{\omega}(\bm{x},\bm{y}) = \sum_{i=1}^{d} [\bm{x}]_i \ln \left( \frac{[\bm{x}]_i}{[\bm{y}]_i} \right)$. In this case we can write step 3 in $\mathsf{ODCMD}$ explicitly as follows:
\BEASN
\left[\bm{y}_{i,t}^{\star}\right]_s &=& \frac{ [\bm{x}_{i,t}]_s \exp \left( - \eta [\nabla_{i,t}]_s \right) }
{ \sum_{j=1}^{d}  [\bm{x}_{i,t}]_j \exp \left( - \eta [\nabla_{i,t}]_j \right) }  ,  \qquad\qquad s\in[d] .
\EEASN
It is worth noting that choosing the standard Euclidean norm as the distance-generating function in this case would yield no explicit solutions, and in fact, it involves computing the solution of $d$-dimensional nonlinear equation at step 3.
}

\dy{
{\it $p$-norm {\sf ODCMD} with $\ell_1$-regularization:}
Let $r(\bm{x}) = \lambda \|\bm{x}\|_1$ and $\rho_t = 0$ for all $t\in[T]$. For the case of standard gradient descent which utilizes Euclidean distance as distance-generating function, we have $\omega(\bm{x}) = \frac{1}{2} \| \bm{x} \|_2^2$ and the associated Bregman divergence is $V_{\omega}(\bm{x},\bm{y}) = \frac{1}{2} \| \bm{x} - \bm{y} \|_2^2$. Based on the argument in Remark 3 and $\bm{x}_{i,1} = \bm{0}$ for all $i\in\mathcal{V}$, it leads to the following average regret bound:
\BEAS
\overline{\bm{Reg}}_j(T) = \frac{ \mathcal{O} \left( G_{\ell,2}  \| \bm{x}^{\star} \|_2  \right) } { \sqrt{T} }
\label{regret-l2}
\EEAS
where $G_{\ell,2}$ is the uniform bound on $\nabla \ell_{i,t}$ with respect to the Euclidean norm. For the case of mirror descent, we consider distance-generating function $\omega(\bm{x})$ which is the $\ell_p$ norm squared, that is, $\omega(\bm{x}) = \frac{1}{2} \| \bm{x} \|_p^2$, $p\in(1,2]$; the associated Bregman divergence is
$
V_{\omega}(\bm{x},\bm{y}) = \frac{1}{2} \| \bm{x} \|_p^2 + \frac{1}{2} \| \bm{y} \|_p^2
- \big< \bm{x}, \nabla \frac{1}{2} \| \bm{y} \|_p^2  \big>
$
(see, e.g., \cite{gentile2003}). Based on $V_{\omega}(\bm{x},\bm{y})$ and the fact that $\frac{1}{2} \| \bm{x} \|_p^2$ is $(p-1)$-strongly convex with respect to the $\ell_p$-norm, we find that
\BEAS
\overline{\bm{Reg}}_j(T) = \frac{ \mathcal{O} \left( \frac{1}{p-1}   G_{\ell,q}  \| \bm{x}^{\star} \|_p  \right) } { \sqrt{T} }
\label{regret-lp}
\EEAS
where $G_{\ell,q}$ is the uniform bound on $\nabla\ell_{i,t}$ with respect to the $\ell_q$ norm with $q$ satisfying $\frac{1}{p} + \frac{1}{q} = 1$.
}

\dy{
Following the intuition in \cite{xiao2010jmlr}, we transform the average regret bound (\ref{regret-lp}) in terms of $\ell_{\infty}$ and $\ell_{1}$ norms. Let $q = \ln (d)$ with $d\geq e^2$ (i.e., $q\geq 2$) and assume that $\max_{t\in[T]} \max_{i\in[m]} \max_{j\in[d]} | [\nabla \ell_{i,t}]_j | \leq G_{\ell,\infty}$, which gives $\| \nabla \ell_{i,t} \|_q \leq G_{\ell,\infty} d^{1/q} = e G_{\ell,\infty}$, for any $i\in[m]$ and $t\in[T]$. In addition, we have $\| \bm{x}^{\star} \|_p \leq \| \bm{x}^{\star} \|_1 $ (because $q>1$). Combining the preceding bounds, $p = \frac{\ln(d)}{\ln(d)-1}$ (due to $q = \ln (d)$ and $\frac{1}{p} + \frac{1}{q} = 1$) and (\ref{regret-lp}), yields
\BEAS
\overline{\bm{Reg}}_j(T) = \frac{ \mathcal{O} \left( \ln(d)   G_{\ell,\infty}  \| \bm{x}^{\star} \|_1  \right) } { \sqrt{T} }.
\label{regret-lp-v2}
\EEAS
For the case of Euclidean norm, using $\| \nabla \ell_{i,t} \|_2 \leq \sqrt{d} G_{\ell,\infty}$ we can replace $G_{\ell,2}$ in (\ref{regret-l2}) with $\sqrt{d} G_{\ell,\infty}$,
\BEAS
\overline{\bm{Reg}}_j(T) = \frac{ \mathcal{O} ( \sqrt{d} G_{\ell,\infty}  \| \bm{x}^{\star} \|_2  )} { \sqrt{T} }.
\label{regret-l2-v2}
\EEAS
}

\dy{
For distributed learning problems in which the features are dense (i.e., $G_{\ell,2}$ is close to $\sqrt{d} G_{\ell,\infty}$) and $\bm{x}^{\star}$ is very sparse (i.e., $\bm{x}^{\star}$ has only $k \ll d$ non-zero elements, due to $\ell_1$-regularization), the ratio between the bound in (\ref{regret-l2-v2}) and the bound in (\ref{regret-lp-v2}) becomes
\BEASN
\frac{\sqrt{d} \| \bm{x}^{\star} \|_2}{\ln(d) \| \bm{x}^{\star} \|_1}
\geq \frac{\sqrt{d} }{\ln(d) \sqrt{k}} > 1
\EEASN
for large values of $d$, i.e., problems in high dimensions. This means that using $\ell_p$ norm as the distance-generating function in $\mathsf{ODCMD}$ can lead to better optimality for each $T$.
}

\subsection{Proof of Theorem \ref{corollary-main-cvx}}
This subsection focuses on the proof of Theorem \ref{corollary-main-cvx}, which relies on the following two crucial lemmas, \ie, Lemmas \ref{theorem-basic-conv} and \ref{lemma-disagreement}. The first lemma establishes the basic convergence results of Algorithm \ref{alg-dcmd}.
\begin{lemma}\label{theorem-basic-conv}
Let Assumptions 1--5 hold, and the vectors $\{  \bmx_{i,t} \}_{t=1}^{T} $ and $\{  \bmy_{i,t} \}_{t=1}^{T}$ be generated by Algorithm \ref{alg-dcmd}.
Then, for all $T \geq 1$ and $j\in[m]$, we have
\dy{
\BEASN
\overline{\reg}_{j}(T)
&\leq&  \frac{1}{\eta T} \sum_{i=1}^{m} \breg_\omega (\bmx^\star , \bmx_{i,1}) +  \frac{m G_\ell^2 }{2\sigma_\omega} \eta \nn\\
&&+ G_r  \frac{1}{T} \sum_{t=1}^{T} \sum_{i=1}^{m}  \| \bmy_{i,t}^\star -  \bmx_{i,t} \|  \nn\\
&&+ \left( G_\ell +  \frac{2 }{\eta} G_\omega D_\calK \right) \frac{1}{T} \sum_{t=1}^{T} \sum_{i=1}^{m} \| \bmy_{i,t} -  \bmy_{i,t}^\star \|  \nn\\
&&+ \left( G_\ell + G_r \right)  \frac{1}{T} \sum_{t=1}^{T} \sum_{i=1}^{m} \| \bmx_{i,t} -  \bmx_{j,t} \|.
\EEASN
}
\end{lemma}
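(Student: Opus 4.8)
The plan is to prove this per-node bound by first isolating the network-disagreement contribution and then running a composite mirror-descent argument locally at each node. Fix the node $j$ and, for each pair $(i,t)$, write the summand of $\overline{\reg}_j(T)$ as
\[
[\ell_{i,t}(\bmx_{j,t})+r(\bmx_{j,t})]-[\ell_{i,t}(\bmx_{i,t})+r(\bmx_{i,t})]+[\ell_{i,t}(\bmx_{i,t})+r(\bmx_{i,t})]-[\ell_{i,t}(\bmx^\star)+r(\bmx^\star)].
\]
The first difference is bounded by $(G_\ell+G_r)\|\bmx_{j,t}-\bmx_{i,t}\|$ via the Lipschitz continuity of $\ell_{i,t}$ and $r$ (Assumption 3), which produces the disagreement term. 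It then remains to bound the \emph{local} regret $\sum_{t}\sum_i\{[\ell_{i,t}(\bmx_{i,t})+r(\bmx_{i,t})]-[\ell_{i,t}(\bmx^\star)+r(\bmx^\star)]\}$.

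For the local regret I would linearize the loss by convexity, $\ell_{i,t}(\bmx_{i,t})-\ell_{i,t}(\bmx^\star)\le\langle\grad_{i,t},\bmx_{i,t}-\bmx^\star\rangle$ (valid since $\grad_{i,t}=\grad\ell_{i,t}(\bmx_{i,t})$), and insert the exact minimizer $\bmy_{i,t}^\star$ of the step-3 subproblem, splitting the gradient term as $\langle\grad_{i,t},\bmx_{i,t}-\bmy_{i,t}^\star\rangle+\langle\grad_{i,t},\bmy_{i,t}^\star-\bmx^\star\rangle$ and the regularizer as $[r(\bmx_{i,t})-r(\bmy_{i,t}^\star)]+[r(\bmy_{i,t}^\star)-r(\bmx^\star)]$. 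The regularizer bridge $r(\bmx_{i,t})-r(\bmy_{i,t}^\star)\le G_r\|\bmy_{i,t}^\star-\bmx_{i,t}\|$ yields the third term. The core estimate is the composite mirror-descent inequality obtained from the first-order optimality of $\bmy_{i,t}^\star$, a subgradient $\bm s\in\partial r(\bmy_{i,t}^\star)$, and the three-point identity for $\breg_\omega$: for the comparator $\bmx^\star$,
\[
\langle\grad_{i,t},\bmy_{i,t}^\star-\bmx^\star\rangle+r(\bmy_{i,t}^\star)-r(\bmx^\star)\le\frac{1}{\eta}\big[\breg_\omega(\bmx^\star,\bmx_{i,t})-\breg_\omega(\bmx^\star,\bmy_{i,t}^\star)-\breg_\omega(\bmy_{i,t}^\star,\bmx_{i,t})\big].
\]
Combining the leftover $\langle\grad_{i,t},\bmx_{i,t}-\bmy_{i,t}^\star\rangle$ with $-\frac{1}{\eta}\breg_\omega(\bmy_{i,t}^\star,\bmx_{i,t})$, bounding the inner product by $G_\ell\|\bmx_{i,t}-\bmy_{i,t}^\star\|$ (Assumption 3) and the divergence below by $\frac{\sigma_\omega}{2}\|\bmy_{i,t}^\star-\bmx_{i,t}\|^2$ (strong convexity, Assumption 4), and maximizing over the norm, gives the $\frac{G_\ell^2}{2\sigma_\omega}\eta$ term after summation.

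The remaining divergences must telescope through the averaging step, and this is where the inexactness enters. Since the next iterate uses the computed point $\bmy_{i,t}$ rather than $\bmy_{i,t}^\star$, I would replace $\bmy_{i,t}^\star$ by $\bmy_{i,t}$ both in the linearized loss, paying $G_\ell\|\bmy_{i,t}-\bmy_{i,t}^\star\|$, and in the comparator divergence, using $\breg_\omega(\bmx^\star,\bmy_{i,t})-\breg_\omega(\bmx^\star,\bmy_{i,t}^\star)\le 2G_\omega D_\calK\|\bmy_{i,t}-\bmy_{i,t}^\star\|$, which follows from the Lipschitz-gradient property of $\omega$ (Assumption 4) together with the bounded diameter $D_\calK$ (Assumption 2); these combine into the fourth term with coefficient $G_\ell+\frac{2}{\eta}G_\omega D_\calK$. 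Finally I would sum over $i$ and use Assumption 5 (convexity of $\breg_\omega$ in its second argument) with the double stochasticity of $\wm(t)$ (Assumption 1(a)) to get $\sum_i\breg_\omega(\bmx^\star,\bmx_{i,t+1})\le\sum_i\breg_\omega(\bmx^\star,\bmy_{i,t})$, so the comparator divergences telescope over $t\in[T]$ down to $\frac{1}{\eta}\sum_i\breg_\omega(\bmx^\star,\bmx_{i,1})$ (nonnegativity of $\breg_\omega$ discarding the terminal term); dividing by $T$ gives the first term. The main obstacle is precisely this last coupling, namely making the per-node Bregman terms telescope across the consensus averaging while simultaneously tracking the approximation errors, and the combination of Assumption 5 with double stochasticity is what makes it go through.
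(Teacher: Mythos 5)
Your proposal is correct and follows essentially the same route as the paper's proof: the same split into a consensus-disagreement term plus a local composite regret, the same composite mirror-descent inequality from the first-order optimality of $\bmy_{i,t}^\star$ and the three-point identity, the same accounting for inexactness by swapping $\bmy_{i,t}^\star$ for $\bmy_{i,t}$ at cost $G_\ell\|\bmy_{i,t}-\bmy_{i,t}^\star\|+\frac{2}{\eta}G_\omega D_\calK\|\bmy_{i,t}-\bmy_{i,t}^\star\|$, and the same telescoping via Assumption 5 with double stochasticity. The only cosmetic differences are that you perform the swap directly on the Bregman divergences (the paper does it on the $\grad\omega$ inner product) and obtain the $\frac{mG_\ell^2}{2\sigma_\omega}\eta$ term by maximizing $G_\ell u-\frac{\sigma_\omega}{2\eta}u^2$ rather than via Fenchel--Young; both yield the identical bound.
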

\begin{proof}
See Appendix \ref{appen:theorem:basic:conv}.
\end{proof}
\begin{remark}
The error bound in Lemma \ref{theorem-basic-conv} consists of five terms: the first three terms are optimization error terms; the fourth term is the penalty incurred for solving the optimization problem in step 3 in Algorithm \ref{alg-dcmd} with an approximate solution; and the last term is an additional penalty incurred due to having different decisions of nodes in the network, which is the cost of aligning each node's decision with those of its neighbors.
\end{remark}

The following lemma aims at providing bounds on the last three terms in Lemma \ref{theorem-basic-conv}.
\begin{lemma}\label{lemma-disagreement}
Let Assumptions \ref{assump:network}, \ref{assump-fcn-obj}, and \ref{assump-fcn-phi} hold, and let the vectors $\{  \bmx_{i,t} \} $ and $\{  \bmy_{i,t} \}$ be generated by Algorithm \ref{alg-dcmd}. We have that, for any $i,j\in[m]$,
\BIT
\item[(a)]
$
\| \bmy_{i,t} -  \bmy_{i,t}^\star \|  \leq \sqrt{  \frac{2}{\sigma_\omega } \eta \rho_t } .
$
\item[(b)]
$
\| \bmy_{i,t}^\star - \bmx_{i,t} \| \leq \frac{1}{\sigma_\omega} \left( G_\ell + G_r \right) \eta.
$
\item[(c)] The disagreement among nodes satisfies
\BEASN
\hspace{-1.5em}\sum_{t=1}^{T} \sum_{i=1}^{m} \| \bmx_{i,t} - \bmx_{j,t} \|
&\leq& \frac{2 \vartheta}{1-\kappa} \left(\sum_{i=1}^{m} \| \bmx_{i,1} \| \right) \nn\\
&&\hspace{-10em}+   \frac{ 2 m \vartheta  }{\sigma_\omega (1-\kappa)} \left( G_\ell + G_r \right) \eta T
+ \frac{2 m \vartheta}{1-\kappa}  \sqrt{  \frac{2}{\sigma_\omega } } \sum_{t=1}^{T} \sqrt{\eta \rho_t }.
\EEASN
\EIT
\end{lemma}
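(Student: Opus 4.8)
The three bounds rest on different ingredients, so I would establish them separately and then assemble (c) from (a) and (b). For (a) and (b) the key observation is that the step-3 subproblem has a strongly convex objective. Writing $h_{i,t}(\bmx) := \langle \grad_{i,t}, \bmx\rangle + r(\bmx) + \frac{1}{\eta}\breg_\omega(\bmx,\bmx_{i,t})$, the map $\bmx\mapsto\breg_\omega(\bmx,\bmx_{i,t})$ is $\sigma_\omega$-strongly convex by Assumption 4, and the linear and convex $r$ terms only help, so $h_{i,t}$ is $(\sigma_\omega/\eta)$-strongly convex with respect to $\|\cdot\|$. For (a) I would combine the quadratic-growth inequality at the minimizer, $h_{i,t}(\bmy_{i,t}) - h_{i,t}(\bmy_{i,t}^\star) \geq \frac{\sigma_\omega}{2\eta}\|\bmy_{i,t}-\bmy_{i,t}^\star\|^2$, with the $\rho_t$-approximation inequality $h_{i,t}(\bmy_{i,t}) - h_{i,t}(\bmy_{i,t}^\star) \leq \rho_t$; rearranging gives $\|\bmy_{i,t}-\bmy_{i,t}^\star\| \leq \sqrt{2\eta\rho_t/\sigma_\omega}$, which is exactly (a).

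For (b) I would use the first-order optimality condition for $\bmy_{i,t}^\star$ over $\calK$: there is $s\in\partial r(\bmy_{i,t}^\star)$ with $\langle \grad_{i,t} + s + \frac{1}{\eta}(\grad\omega(\bmy_{i,t}^\star) - \grad\omega(\bmx_{i,t})),\, \bmx - \bmy_{i,t}^\star\rangle \geq 0$ for all $\bmx\in\calK$. Taking $\bmx = \bmx_{i,t}$ and invoking the strong monotonicity of $\grad\omega$ (the gradient form of $\sigma_\omega$-strong convexity), $\langle \grad\omega(\bmy_{i,t}^\star) - \grad\omega(\bmx_{i,t}),\, \bmy_{i,t}^\star - \bmx_{i,t}\rangle \geq \sigma_\omega\|\bmy_{i,t}^\star - \bmx_{i,t}\|^2$, yields $\frac{\sigma_\omega}{\eta}\|\bmy_{i,t}^\star-\bmx_{i,t}\|^2 \leq \langle \grad_{i,t}+s,\, \bmx_{i,t}-\bmy_{i,t}^\star\rangle$. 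Bounding the right side by Cauchy--Schwarz with $\|\grad_{i,t}\|_\ast\leq G_\ell$ and $\|s\|_\ast\leq G_r$ (Assumption 3) and cancelling one factor of $\|\bmy_{i,t}^\star-\bmx_{i,t}\|$ gives (b); note that using only the function-value form of strong convexity costs a factor of two, so it is the optimality-condition route that produces the stated constant.

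The real work is (c). I would first rewrite the averaging step as a consensus-plus-perturbation recursion: setting $\bme_{j,t} := \bmy_{j,t} - \bmx_{j,t}$, step 4 becomes $\bmx_{i,t+1} = \sum_{j}[\wm(t)]_{ij}\bmx_{j,t} + \sum_j[\wm(t)]_{ij}\bme_{j,t}$. Introducing the transition matrices $\wm(t{:}s) := \wm(t)\wm(t-1)\cdots\wm(s)$ and unrolling, each iterate splits into a term carrying the initial values $\bmx_{j,1}$ plus a sum of past perturbations $\bme_{j,s}$ weighted by entries of $\wm(t{:}s)$. The crucial analytic input is the standard geometric mixing estimate for products of doubly stochastic matrices under Assumption 1, $\big|[\wm(t{:}s)]_{ij} - \frac{1}{m}\big| \leq \vartheta\kappa^{t-s}$ with the stated $\vartheta = (1-\frac{\zeta}{4m^2})^{-2}$ and $\kappa = (1-\frac{\zeta}{4m^2})^{1/B}$; this is exactly where Assumptions 1(a)--(c) (double stochasticity, uniformly positive weights, $B$-connectivity) are used. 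With this in hand I would bound the deviation of each iterate from the network average $\bar\bmx_t := \frac{1}{m}\sum_i\bmx_{i,t}$ (double stochasticity makes the $\frac{1}{m}$ terms cancel cleanly), then pass to the pairwise disagreement via $\|\bmx_{i,t}-\bmx_{j,t}\| \leq \|\bmx_{i,t}-\bar\bmx_t\| + \|\bar\bmx_t - \bmx_{j,t}\|$. Summing over $t$, I would reverse the order of the double sum $\sum_{t}\sum_{s<t}\kappa^{t-1-s}\|\bme_{j,s}\|$ and use $\sum_{t}\kappa^{t} \leq \frac{1}{1-\kappa}$ to collapse the geometric factors, leaving a bound of the form $\frac{\vartheta}{1-\kappa}\big(\sum_i\|\bmx_{i,1}\| + \sum_{s,j}\|\bme_{j,s}\|\big)$. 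Finally I would substitute $\|\bme_{j,s}\| \leq \|\bmy_{j,s}-\bmy_{j,s}^\star\| + \|\bmy_{j,s}^\star-\bmx_{j,s}\| \leq \sqrt{2\eta\rho_s/\sigma_\omega} + \frac{\eta}{\sigma_\omega}(G_\ell+G_r)$ from (a) and (b); summing over $s$ and $j$ turns the deterministic part into the $(G_\ell+G_r)\eta T$ term and the approximation part into the $\sum_t\sqrt{\eta\rho_t}$ term, reproducing (c).

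The main obstacle is the bookkeeping in (c): correctly tracking the transition-matrix indices through the unrolling, justifying the interchange of the $t$ and $s$ summations, and carrying the constants from the mixing estimate so that the geometric sums telescope into the $1/(1-\kappa)$ factors with the right powers of $m$. Parts (a) and (b) are short once the strong convexity of the subproblem is identified and the optimality condition is written down; the consensus estimate is the load-bearing step and is the only place the network assumptions enter.
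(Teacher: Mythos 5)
Your proposal is correct and follows essentially the same route as the paper: (a) via quadratic growth of the $(\sigma_\omega/\eta)$-strongly convex subproblem at its constrained minimizer combined with the $\rho_t$-approximation, (b) via the first-order optimality condition at $\bmy_{i,t}^\star$ with $\bmx=\bmx_{i,t}$, strong monotonicity of $\grad\omega$, and Cauchy--Schwarz, and (c) via unrolling the consensus recursion with the geometric mixing bound $|[\wm(t{:}\tau)]_{ij}-\frac{1}{m}|\leq\vartheta\kappa^{t-\tau}$, deviation from the network average, and the triangle inequality. The only cosmetic difference is that the paper carries the two perturbations $\bmy_{j,t}-\bmy_{j,t}^\star$ and $\bmy_{j,t}^\star-\bmx_{j,t}$ separately through the recursion rather than lumping them into a single $\bme_{j,t}=\bmy_{j,t}-\bmx_{j,t}$ and splitting at the end, which changes nothing substantive.
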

\begin{proof}
See Appendix \ref{appen:lemma:disagreement}.
\end{proof}

Therefore, it is straightforward to derive the convergence results in Theorem \ref{corollary-main-cvx} by combining the results in Lemmas \ref{theorem-basic-conv} and \ref{lemma-disagreement}.



\section{Bandit Feedback Model}
\pr{
In this section, we focus on the case of bandit feedback, where at the end of each round, node only has access to the information of function values. The pseudo-code of our algorithm adapt to this feedback is provided in Algorithm \ref{alg-dcmd-bandit}.
}
\subsection{Algorithm $\mathsf{BanODCMD}$}
Under bandit feedback, each node $i$ can only observe the value of the loss function at (or near) point $\bmx_{i,t}$ at round $t$, instead of the entire loss function $\ell_{i,t}$. Specifically, each node does not know the gradient of $\ell_{i,t}$ at $\bmx_{i,t}$. To this end, we propose $\mathsf{BanODCMD}$, where at each round each node queries the loss function at two randomized points around $\bmx_{i,t}$, rather than the gradient $\grad \ell_{i,t} (\bmx_{i,t})$.

In the bandit setting, we impose the following standard assumption on the constraint set $\calK$ (see, \eg, \cite{flaxman2005,agarwal2010}), instead of Assumption \ref{assump-set-bounded}.
\begin{assumption}\label{assump-set-bounded-bandit}
The constraint set $\calK$ contains the Euclidean ball of radius $\underline{R}$ centered at the origin and is contained in the Euclidean ball of radius $\overline{R}$, that is,
$
\mathbb{B}_{\underline{R}}   \subseteq  \calK   \subseteq  \mathbb{B}_{\overline{R}}.
$
\end{assumption}

\begin{algorithm}
\caption{$\mathsf{BanODCMD}$: Bandit Online Distributed Composite Mirror Descent}
\label{alg-dcmd-bandit}
\begin{algorithmic}[1]
\REQUIRE step size $\eta$, the optimization error sequence $\{\rho_t\}_{t=1}^{T}$, the exploration parameter $\delta$, and the shrinkage parameter $\xi$
\ENSURE $\bmx_{i,1} = \arg\min_{\bmx\in(1-\xi)\calK} \omega(\bmx), i\in[m]$
\FOR{$t=1$ to $T$}
\STATE
Node $i$ queries $\ell_{i,t} (\bmx_{i,t} + \delta \bmu_{i,t})$ and $\ell_{i,t} (\bmx_{i,t} - \delta \bmu_{i,t})$, where $\bmu_{i,t}$ is a unit vector generated uniformly at random (\ie, $\|\bmu_{i,t}\|_2 = 1 $), and node $i$ sets
\BEASN
\grade_{i,t} = \frac{d}{2 \delta} \left(  \ell_{i,t} (\bmx_{i,t} + \delta \bmu_{i,t}) - \ell_{i,t} (\bmx_{i,t} - \delta \bmu_{i,t})\right)  \bmu_{i,t}
\EEASN
\STATE
Node $i$ computes a \emph{$\rho_t$-approximate} solution $\bmy_{i,t} \in (1-\xi)\calK$ to the following optimization problem:
\BEASN
\mathop{\arg\min}_{ \bmx\in(1-\xi)\calK } \left< \grade_{i,t}, \bmx \right>  + r(\bmx) + \frac{1}{\eta} \breg_\omega (\bmx,\bmx_{i,t} )
\EEASN
where $(1-\xi)\calK = \{ (1-\xi) \bmx \mid \bmx\in\mathcal{K} \}$
\STATE
Node $i$ updates $\bmx_{i,t+1}$ by communicating with its instant neighbors
\BEASN
\bmx_{i,t+1} &=& \sum_{j=1}^{m} [\wm(t)]_{ij} \bmy_{j,t}  \label{alg-ocmd-3}
\EEASN
\ENDFOR
\end{algorithmic}
\end{algorithm}

As in the full-information feedback case, a $\rho_t$-approximate solution $\bmy_{i,t} \in (1-\xi)\calK$ in step 3 in Algorithm \ref{alg-dcmd-bandit} is computed in the following sense:
\BEASN
&&\left< \grade_{i,t}, \bmy_{i,t} \right> + r(\bmy_{i,t})+\frac{1}{\eta} \breg_\omega (\bmy_{i,t},\bmx_{i,t}) \nn\\
&\leq&  \left< \grade_{i,t} , \bmy_{i,t}^\star \right>   +  r(\bmy_{i,t}^\star) + \frac{1}{\eta} \breg_\omega (\bmy_{i,t}^\star,\bmx_{i,t})  + \rho_t
\EEASN
where
\BEASN
\bmy_{i,t}^\star &=& \mathop{\arg\min}_{ \bmx\in(1-\xi)\calK } \left< \grade_{i,t}, \bmx \right>  + r(\bmx) + \frac{1}{\eta} \breg_\omega (\bmx,\bmx_{i,t} ).
\EEASN

\begin{remark}
In the bandit feedback model, new challenges arise in the absence of gradient information when designing the algorithm; we remedy this by introducing a distributed gradient estimator that is based on two-point bandit feedback from the loss function. Specifically, in Algorithm \ref{alg-dcmd-bandit}, only two functional evaluations are utilized to construct a gradient estimator $\grade_{i,t}$, which is different from Algorithm \ref{alg-dcmd} where the gradient information is required.
\end{remark}

The following two lemmas characterize the basic properties of the gradient estimator $\grade_{i,t}$, which play a crucial role in the average regularized regret analysis of Algorithm \ref{alg-dcmd-bandit}. Define a smoothed function of the following form:
\BEASN
\breve{\ell}_{i,t} (\bmx) := \expect_{\bm{v}\in\mathbb{B}} \left[\ell_{i,t} (\bmx + \delta \bm{v})\right]
\EEASN
where $\bm{v}$ is a vector selected uniformly at random from the unit ball $\mathbb{B}$ in $\reals^{d}$. Then we have the following lemma, whose proof is quite straightforward (see, for example, \cite{agarwal2010,shamir2017}).
\begin{lemma}\label{lemma-bandit}
Let $\mathcal{F}_t$ be the $\sigma$-field generated by the entire history of the random variables to round $t$, we have
\BIT
\item[(a)]
$\left| \breve{\ell}_{i,t} (\bmx) - \breve{\ell}_{i,t} (\bmy) \right| \leq G_\ell \| \bmx - \bmy \|, \qquad \forall \bmx,\bmy\in\calK$.
\item[(b)]
$\max_{\bmx \in \calK} \left| \breve{\ell}_{i,t} (\bmx) - \ell_{i,t} (\bmx) \right| \leq  G_\ell \delta  $.
\item[(c)]
$\expect \left[\grade_{i,t} \mid \mathcal{F}_t\right] = \grad \breve{\ell}_{i,t} (\bmx_{i,t})$.
\EIT
\end{lemma}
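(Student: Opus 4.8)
The plan is to treat the three parts separately. Parts (a) and (b) are direct consequences of the $G_\ell$-Lipschitz property of $\ell_{i,t}$ (Assumption \ref{assump-fcn-obj}) transported through the averaging in the definition of $\breve{\ell}_{i,t}$, while part (c)---the unbiasedness of the two-point estimator---is the substantive claim and rests on the divergence (Stokes') theorem together with a symmetrization step.

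For part (a), I would write $\breve{\ell}_{i,t}(\bmx) - \breve{\ell}_{i,t}(\bmy) = \expect_{\bmv\in\mathbb{B}}[\ell_{i,t}(\bmx + \delta\bmv) - \ell_{i,t}(\bmy + \delta\bmv)]$, pull the absolute value inside the expectation by Jensen's inequality, and bound each integrand by $G_\ell\|(\bmx+\delta\bmv) - (\bmy+\delta\bmv)\| = G_\ell\|\bmx - \bmy\|$; the perturbations $\delta\bmv$ cancel, so the Lipschitz bound is immediate. For part (b), the same two moves give $|\breve{\ell}_{i,t}(\bmx) - \ell_{i,t}(\bmx)| \le \expect_{\bmv\in\mathbb{B}}[G_\ell\|\delta\bmv\|] \le G_\ell\delta$ upon using $\|\bmv\|\le 1$ on the unit ball, and taking the maximum over $\bmx\in\calK$ completes it.

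The core is part (c). I would first rewrite the smoothed function as a volume average via the change of variables $\bm{w} = \delta\bmv$, namely $\breve{\ell}_{i,t}(\bmx) = \frac{1}{\mathrm{vol}(\delta\mathbb{B})}\int_{\delta\mathbb{B}}\ell_{i,t}(\bmx+\bm{w})\,d\bm{w}$, then differentiate in $\bmx$ and apply the divergence theorem to turn the gradient of this domain integral into a surface integral of $\ell_{i,t}$ against the outward unit normal over the sphere of radius $\delta$. The ratio of the sphere's surface measure to the ball's volume supplies the dimensional factor, yielding the one-point identity $\grad\breve{\ell}_{i,t}(\bmx) = \frac{d}{\delta}\expect_{\bmu}[\ell_{i,t}(\bmx+\delta\bmu)\bmu]$ with $\bmu$ uniform on the unit sphere. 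Evaluating at the $\mathcal{F}_t$-measurable point $\bmx_{i,t}$ (so the conditional expectation is over $\bmu_{i,t}$ only) and then symmetrizing---using that $\bmu_{i,t}$ and $-\bmu_{i,t}$ are equidistributed, so that $\expect_{\bmu}[\ell_{i,t}(\bmx_{i,t}-\delta\bmu)\bmu] = -\expect_{\bmu}[\ell_{i,t}(\bmx_{i,t}+\delta\bmu)\bmu]$---I would combine the two sphere averages into the two-point estimator $\grade_{i,t}$ and recover $\expect[\grade_{i,t}\mid\mathcal{F}_t] = \grad\breve{\ell}_{i,t}(\bmx_{i,t})$.

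The main obstacle is the divergence-theorem step: one must justify differentiating under the integral sign and correctly track the constant $d/\delta$ arising from the volume-to-surface measure ratio. Because $\ell_{i,t}$ is only assumed Lipschitz (hence differentiable merely almost everywhere), the cleanest route is to observe that the identity holds for the ball-averaged function irrespective of the pointwise differentiability of $\ell_{i,t}$ itself, or simply to invoke the established derivations of \cite{flaxman2005,agarwal2010,shamir2017}, which is the route the statement of the lemma already signals. Once the one-point identity is secured, the symmetrization giving the two-point form is routine.
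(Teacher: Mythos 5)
Your proof is correct and is precisely the standard argument that the paper itself defers to by citing \cite{agarwal2010,shamir2017} (the paper offers no proof of its own, remarking only that it is "quite straightforward"): parts (a) and (b) follow from Jensen's inequality plus the $G_\ell$-Lipschitz property, and part (c) follows from the divergence-theorem identity $\grad\breve{\ell}_{i,t}(\bmx) = \frac{d}{\delta}\,\expect_{\bmu}\left[\ell_{i,t}(\bmx+\delta\bmu)\bmu\right]$ combined with the $\bmu\mapsto-\bmu$ symmetrization. Your write-up supplies the details correctly, including the volume-to-surface ratio giving the factor $d/\delta$ and the observation that the ball-averaged function is differentiable even though $\ell_{i,t}$ is only assumed Lipschitz.
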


\begin{lemma}\label{lemma-bandit-2}
\pr{
The gradient estimator $\grade_{i,t}$ satisfies the following for all $i\in [m]$ and $t\in [T]$:
\[
\| \grade_{i,t}\|_\ast  \leq \overline{p}\, \overline{p}_\ast d G_\ell
\]
where $\overline{p}$ and $\overline{p}_\ast$ are constants that satisfy the inequalities $\| \bmx \| \leq \overline{p}  \| \bmx \|_2$ and $\| \bmx \|_{\ast} \leq \overline{p}_\ast  \| \bmx \|_2$ for any $\bmx$, respectively.
}
\end{lemma}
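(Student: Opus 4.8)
The plan is to bound $\|\grade_{i,t}\|_\ast$ by directly unpacking the definition of the two-point estimator and exploiting the absolute homogeneity of the dual norm together with the Lipschitz continuity of $\ell_{i,t}$. First I would write the estimator as a scalar multiple of the direction vector $\bmu_{i,t}$ and pull the scalar out of the dual norm, obtaining
\[
\|\grade_{i,t}\|_\ast = \frac{d}{2\delta}\,\bigl| \ell_{i,t}(\bmx_{i,t}+\delta\bmu_{i,t}) - \ell_{i,t}(\bmx_{i,t}-\delta\bmu_{i,t})\bigr|\,\|\bmu_{i,t}\|_\ast .
\]

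The second step controls the scalar factor. Since the two query points differ by exactly $2\delta\bmu_{i,t}$, Assumption \ref{assump-fcn-obj} (the $G_\ell$-Lipschitzness of $\ell_{i,t}$ in the norm $\|\cdot\|$) gives
\[
\bigl| \ell_{i,t}(\bmx_{i,t}+\delta\bmu_{i,t}) - \ell_{i,t}(\bmx_{i,t}-\delta\bmu_{i,t})\bigr| \le G_\ell \,\|2\delta\bmu_{i,t}\| = 2\delta G_\ell\,\|\bmu_{i,t}\| .
\]
Substituting this into the previous display cancels the $2\delta$ factor and leaves $\|\grade_{i,t}\|_\ast \le d\,G_\ell\,\|\bmu_{i,t}\|\,\|\bmu_{i,t}\|_\ast$.

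The final step converts the two appearances of $\bmu_{i,t}$ into the Euclidean norm via the equivalence constants. Because $\bmu_{i,t}$ is drawn as a Euclidean unit vector, $\|\bmu_{i,t}\|_2 = 1$, so the hypotheses $\|\bmx\| \le \overline{p}\|\bmx\|_2$ and $\|\bmx\|_\ast \le \overline{p}_\ast\|\bmx\|_2$ yield $\|\bmu_{i,t}\| \le \overline{p}$ and $\|\bmu_{i,t}\|_\ast \le \overline{p}_\ast$. Multiplying these bounds gives the claimed estimate $\|\grade_{i,t}\|_\ast \le \overline{p}\,\overline{p}_\ast\, d\, G_\ell$, which holds deterministically for every realization of $\bmu_{i,t}$ and therefore for all $i\in[m]$ and $t\in[T]$.

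As for difficulties, the argument is essentially a one-line computation, so there is no genuine analytical obstacle. The only point demanding care is the bookkeeping of norms: one must recognize that $\bmu_{i,t}$ enters in two distinct roles — once in the primal norm $\|\cdot\|$ coming from the Lipschitz bound, and once in the dual norm $\|\cdot\|_\ast$ coming from the estimator itself — and that the normalization $\|\bmu_{i,t}\|_2=1$ is with respect to the \emph{Euclidean} norm rather than the general norm $\|\cdot\|$. This is precisely why both equivalence constants $\overline{p}$ and $\overline{p}_\ast$ appear in the final bound.
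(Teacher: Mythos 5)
Your proposal is correct and follows essentially the same route as the paper's own proof: factor the scalar out of the dual norm, apply the $G_\ell$-Lipschitz bound to the two query points separated by $2\delta\bmu_{i,t}$ to cancel the $\frac{1}{2\delta}$, and then convert $\|\bmu_{i,t}\|$ and $\|\bmu_{i,t}\|_\ast$ to the constants $\overline{p}$ and $\overline{p}_\ast$ via $\|\bmu_{i,t}\|_2=1$. No meaningful difference from the paper's argument.
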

\begin{proof}
It follows from the explicit expression for $\grade_{i,t}$ that
\BEASN
\| \grade_{i,t} \|_\ast &\!\!\!\!=\!\!\!\!& \left\| \frac{d}{2 \delta} \left(  \ell_{i,t} (\bmx_{i,t} \!+\! \delta \bmu_{i,t}) \!-\! \ell_{i,t} (\bmx_{i,t} \!-\! \delta \bmu_{i,t})\right)  \bmu_{i,t} \right\|_\ast \nn\\
&\!\!\!\!\leq\!\!\!\!& \frac{d}{2 \delta} \left| \ell_{i,t} (\bmx_{i,t} \!+\! \delta \bmu_{i,t}) \!-\! \ell_{i,t} (\bmx_{i,t} \!-\! \delta \bmu_{i,t})  \right| \cdot \left\| \bmu_{i,t} \right\|_\ast \nn\\
&\!\!\!\!\leq\!\!\!\!& d G_\ell \| \bmu_{i,t} \| \| \bmu_{i,t} \|_\ast \nn\\
&\!\!\!\!\leq\!\!\!\!&  \overline{p}\, \overline{p}_\ast d G_\ell
\EEASN
where in the second inequality we used the Lipschitz continuity of function $\ell_{i,t}$ (cf. Assumption \ref{assump-fcn-obj}), and in the last inequality we used the fact that $\| \bmu_{i,t} \|_2 = 1$ and the equivalence of the norms on finite-dimensional real vector space, \ie, $  \| \bmx \| \leq \overline{p} \| \bmx \|_2 $ and $  \| \bmx \|_\ast \leq \overline{p}_\ast  \| \bmx \|_2 $. The proof is complete.
\end{proof}

\begin{remark}
In step 3 the minimizer $\bmy_{i,t}^\star$ as well as its approximate solution $\bmy_{i,t}$ are required to belong to the set $(1-\xi)\calK$, which is designed to guarantee that the query points $\bmx_{i,t} \pm \delta \bmu_{i,t}$ belong to the constraint set $\calK$ (see Lemma \ref{lemma-flaxman} in the sequel). This is a common technique used widely in online bandit convex optimization (see, \eg, \cite{flaxman2005,agarwal2010}). Note also that the convergence analysis of Algorithm \ref{alg-dcmd-bandit} is much more involved than that of Algorithm \ref{alg-dcmd}, since there exist two new parameters $\delta$ and $\xi$ in Algorithm \ref{alg-dcmd-bandit}, due to the use of the gradient estimator.
\end{remark}

The following lemma is used to design the parameters in Algorithm \ref{alg-dcmd-bandit}, in order to guarantee that the decisions $\bmx_{i,t} \pm \delta \bmu_{i,t}$ belong to the constraint set $\calK$.
\begin{lemma}[\cite{flaxman2005}, Observation 2]\label{lemma-flaxman}
For any $\bmx \in (1-\xi) \calK$ and any unit vector $\bmu$, it holds that $\bmx + \delta \bmu \in \calK$ for any $\delta \in [0, \xi \underline{R}]$.
\end{lemma}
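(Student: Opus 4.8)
The plan is to exploit the convexity of $\calK$ together with the fact that, by Assumption \ref{assump-set-bounded-bandit}, the set $\calK$ contains the ball $\mathbb{B}_{\underline{R}}$, and hence contains both the origin $\bm{0}$ and every point of the form $\underline{R}\bmu$ for a unit vector $\bmu$. The idea is to exhibit the perturbed point $\bmx + \delta\bmu$ as a convex combination of three points, each known to lie in $\calK$, so that the desired membership follows at once from convexity of the feasible set.

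Concretely, I would first unfold the definition of $(1-\xi)\calK$: any $\bmx \in (1-\xi)\calK$ can be written as $\bmx = (1-\xi)\bmz$ for some $\bmz \in \calK$. Then I would set $\alpha := \delta / \underline{R}$ and propose the decomposition
\BEASN
\bmx + \delta\bmu &=& (1-\xi)\,\bmz + \alpha\,(\underline{R}\bmu) + (\xi - \alpha)\,\bm{0},
\EEASN
which is an identity because $\alpha \underline{R} = \delta$ and the last term vanishes. The crux is then to verify that the coefficients $(1-\xi)$, $\alpha$, and $(\xi-\alpha)$ constitute a legitimate convex combination: they sum to $1$ by construction, nonnegativity of $(1-\xi)$ holds since the shrinkage parameter satisfies $\xi \in (0,1)$, nonnegativity of $\alpha$ holds since $\delta \geq 0$, and nonnegativity of the remaining weight $(\xi - \alpha)$ is exactly the inequality $\alpha = \delta/\underline{R} \leq \xi$, i.e. $\delta \leq \xi\underline{R}$. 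Since $\bmz \in \calK$, $\underline{R}\bmu \in \mathbb{B}_{\underline{R}} \subseteq \calK$, and $\bm{0} \in \mathbb{B}_{\underline{R}} \subseteq \calK$, convexity of $\calK$ immediately yields $\bmx + \delta\bmu \in \calK$.

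I do not anticipate a genuine obstacle here: the argument is elementary, and the only subtlety is selecting the correct three anchor points and recognizing that the admissible range $\delta \in [0,\xi\underline{R}]$ is precisely what forces the third weight $(\xi - \alpha)$ to be nonnegative. The endpoint $\delta = 0$ is covered automatically, since then $\alpha = 0$ and the combination collapses to $(1-\xi)\bmz + \xi\,\bm{0} \in \calK$, so the full interval is handled by the same computation.
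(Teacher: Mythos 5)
Your argument is correct: the three-point convex combination $(1-\xi)\bmz + \alpha(\underline{R}\bmu) + (\xi-\alpha)\bm{0}$ with $\alpha = \delta/\underline{R}$ is a valid decomposition, all three anchors lie in $\calK$ by Assumption~\ref{assump-set-bounded-bandit}, and the weight nonnegativity conditions are exactly $\xi\in[0,1]$, $\delta\geq 0$, and $\delta\leq\xi\underline{R}$. The paper itself gives no proof---it imports the statement verbatim as Observation~2 of \cite{flaxman2005}---and your argument is essentially the standard one from that reference (which uses the two-point combination $(1-\xi)\bmz + \xi\,(\delta/\xi)\bmu$); your version merely avoids dividing by $\xi$ and so also covers the degenerate endpoints cleanly.
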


\subsection{Main Convergence Results}
We now establish a theorem that characterizes the average regularized regret of Algorithm \ref{alg-dcmd-bandit}.
\begin{theorem}\label{theorem-bandit}
Let Assumptions 1, 3, 4, 5, and 6 hold. Let the decision sequences $\{  \bmx_{i,t} \}_{t=1}^{T} $ and $\{  \bmy_{i,t} \}_{t=1}^{T}$ be generated by Algorithm \ref{alg-dcmd-bandit}. Set $\delta \leq  \xi \underline{R} $.
Then, for all $T \geq 1$ and $j\in[m]$, we have
\dy{
\BEASN
\expect \left[  \overline{\reg}_j(T) \right]
&\leq& \frac{\mathsf{B}_0}{T}  + \mathsf{B}_1 \frac{1}{\eta T} +  \mathsf{B}_2 \eta  + \frac{\mathsf{B}_3}{T} \sum_{t=1}^{T} \sqrt{\eta \rho_t}  \nn\\
&&+ \frac{\mathsf{B}_4}{T}  \sum_{t=1}^{T} \sqrt{\frac{\rho_t}{\eta}} + \mathsf{B}_5   \delta  + \mathsf{B}_6 \xi
\EEASN
}
where
\BEASN
\mathsf{B}_0 &=& \frac{2 \vartheta}{1-\kappa} \left( G_\ell + G_r \right)  \left(\sum_{i=1}^{m} \expect [ \| \bmx_{i,1} \|] \right) \nn\\
\mathsf{B}_1 &=& \sum_{i=1}^m  \expect \left[ \breg_\omega ((1-\xi)\bmx^\star, \bmx_{i,1}) \right] \nn\\
\mathsf{B}_2 &=& \frac{m}{\sigma_\omega} \left(  \frac{1}{2} (\overline{p}\,\overline{p}_\ast)^2 d^2 G_\ell^2 +  G_r  \left( \overline{p}\,\overline{p}_\ast d G_\ell + G_r \right) \right. \nn\\
&&\left. +  \frac{ 2 \vartheta  }{1-\kappa} \left( G_\ell + G_r \right) \left( \overline{p}\,\overline{p}_\ast d G_\ell + G_r \right)  \right)   \nn\\
\mathsf{B}_3 &=& m \sqrt{  \frac{2}{\sigma_\omega } } \left( \overline{p}\,\overline{p}_\ast d G_\ell  +  \frac{ 2 \vartheta  }{1-\kappa} \left( G_\ell + G_r \right) \right)  \nn\\
\mathsf{B}_4 &=& 4 m \sqrt{  \frac{2}{\sigma_\omega } } \overline{p} G_\omega \overline{R} \nn\\
\mathsf{B}_5 &=& 2 m G_\ell  \nn\\
\mathsf{B}_6 &=& m \overline{p} \left( G_\ell + G_r \right) \overline{R}.
\EEASN
\end{theorem}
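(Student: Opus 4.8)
The plan is to reduce the bandit analysis to the full-information analysis of Theorem \ref{corollary-main-cvx}, carried out for the smoothed losses $\breve{\ell}_{i,t}$ rather than $\ell_{i,t}$, and then to pay separately for the two approximations introduced by the bandit construction (smoothing by $\delta$ and shrinkage by $\xi$). The crucial observation is that Algorithm \ref{alg-dcmd-bandit} is structurally identical to Algorithm \ref{alg-dcmd}: it performs the same approximate mirror-descent and averaging steps, the only differences being that the true gradient $\grad_{i,t}$ is replaced by the estimator $\grade_{i,t}$ and that the feasible set is the shrunk set $(1-\xi)\calK$. I would therefore first re-run the derivations behind Lemmas \ref{theorem-basic-conv} and \ref{lemma-disagreement}, substituting $\grade_{i,t}$ for $\grad_{i,t}$ and $(1-\xi)\bmx^\star$ for $\bmx^\star$. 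Two bookkeeping replacements track through: wherever the argument bounds $\|\grad_{i,t}\|_\ast\le G_\ell$, Lemma \ref{lemma-bandit-2} supplies instead $\|\grade_{i,t}\|_\ast\le \overline{p}\,\overline{p}_\ast d\,G_\ell$ (this accounts for every factor $\overline{p}\,\overline{p}_\ast d\,G_\ell$ in $\mathsf{B}_2$--$\mathsf{B}_3$), whereas the Lipschitz constant of the loss entering the disagreement coefficient stays $G_\ell$, because by Lemma \ref{lemma-bandit}(a) the smoothed loss $\breve{\ell}_{i,t}$ is itself $G_\ell$-Lipschitz. The diameter $D_\calK$ is replaced by $2\overline{p}\,\overline{R}$, since Assumption \ref{assump-set-bounded-bandit} gives $(1-\xi)\calK\subseteq\mathbb{B}_{\overline{R}}$ and hence $\|\bmx-\bmy\|\le\overline{p}\|\bmx-\bmy\|_2\le 2\overline{p}\,\overline{R}$; this is the origin of $\mathsf{B}_4$.

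The second, and most delicate, step is the handling of the expectation. Here I would condition on the $\sigma$-field $\mathcal{F}_t$ generated by the history up to round $t$. Since $\bmx_{i,t}$ is $\mathcal{F}_t$-measurable and the comparator $(1-\xi)\bmx^\star$ is deterministic, Lemma \ref{lemma-bandit}(c) gives $\expect[\langle\grade_{i,t},\bmx_{i,t}-(1-\xi)\bmx^\star\rangle\mid\mathcal{F}_t]=\langle\grad\breve{\ell}_{i,t}(\bmx_{i,t}),\bmx_{i,t}-(1-\xi)\bmx^\star\rangle$. Convexity of $\breve{\ell}_{i,t}$ then lower-bounds this inner product by $\breve{\ell}_{i,t}(\bmx_{i,t})-\breve{\ell}_{i,t}((1-\xi)\bmx^\star)$, exactly as the true gradient did in the proof of Lemma \ref{theorem-basic-conv}. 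Taking total expectation and invoking the tower property converts the full-information bound into an expected-regret bound for the smoothed losses against the shrunk comparator, yielding the terms $\mathsf{B}_0/T$, $\mathsf{B}_1/(\eta T)$, $\mathsf{B}_2\eta$, and the two error sums governed by $\mathsf{B}_3,\mathsf{B}_4$ (the latter through the bandit forms of Lemma \ref{lemma-disagreement}(a)--(c)).

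It remains to translate this expected regret on $\{\breve{\ell}_{i,t}\}$ back to the stated regret on $\{\ell_{i,t}\}$ with comparator $\bmx^\star$, producing the residual terms $\mathsf{B}_5\delta$ and $\mathsf{B}_6\xi$. For the smoothing, Lemma \ref{lemma-bandit}(b) gives $|\breve{\ell}_{j,t}(\bmx)-\ell_{j,t}(\bmx)|\le G_\ell\delta$ uniformly; applying this at each node's decision and at the comparator, over the $m$ summands indexed by $j$, contributes $2mG_\ell\delta=\mathsf{B}_5\delta$ after the $\frac{1}{T}\sum_t$ averaging. For the shrinkage, I would use $\|(1-\xi)\bmx^\star-\bmx^\star\|=\xi\|\bmx^\star\|\le\xi\overline{p}\,\overline{R}$ together with the $G_\ell$-Lipschitzness of $\breve{\ell}_{j,t}$ and the $G_r$-Lipschitzness of $r$ to bound the cost of replacing $(1-\xi)\bmx^\star$ by $\bmx^\star$ by $m(G_\ell+G_r)\overline{p}\,\overline{R}\,\xi=\mathsf{B}_6\xi$. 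Throughout, the hypothesis $\delta\le\xi\underline{R}$ and Lemma \ref{lemma-flaxman} guarantee that every query point $\bmx_{i,t}\pm\delta\bmu_{i,t}$ lies in $\calK$, so the estimator and all function evaluations are well defined.

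I expect the main obstacle to be the expectation bookkeeping rather than any single inequality: one must verify measurability at each round so that the unbiasedness identity of Lemma \ref{lemma-bandit}(c) can be applied termwise, and confirm that the disagreement recursion of Lemma \ref{lemma-disagreement}(c) — which couples products of the now-random per-round increments across time and across nodes — still passes through the expectation. This works because those increments are controlled by the deterministic almost-sure bounds of Lemmas \ref{lemma-bandit-2} and \ref{lemma-disagreement}(a)--(b), and hence decouple from the conditioning.
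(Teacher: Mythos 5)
Your proposal matches the paper's own proof in both structure and detail: the paper likewise re-runs the full-information argument with $\grade_{i,t}$ in place of $\grad_{i,t}$ (invoking Lemma \ref{lemma-bandit-2} for the estimator norm and replacing $D_\calK$ by $2\overline{p}\,\overline{R}$), takes conditional expectation via Lemma \ref{lemma-bandit}(c) to pass to the smoothed losses $\breve{\ell}_{i,t}$ and the shrunk comparator $(1-\xi)\bmx^\star$, and then recovers the stated regret by paying $\mathsf{B}_5\delta$ for smoothing (Lemma \ref{lemma-bandit}(b)) and $\mathsf{B}_6\xi$ for shrinkage, with the last three terms bounded by Lemma \ref{lemma-disagreement}. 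The approach is correct and essentially identical to the paper's.
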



Compared to the average regularized regret bound in Theorem \ref{corollary-main-cvx}, there have two additional terms in the average regularized regret bound in Theorem \ref{theorem-bandit}, which are introduced by using the gradient estimator instead of the gradient in the algorithm. In the sequel, we will see that the \dy{$\mathcal{O}( 1/\sqrt{T} )$ average regularized regret} can be recovered by choosing the parameters $\delta$ and $\xi$ appropriately.

Note that term $\mathsf{B}_1$ can be bounded by using the fact that function $\omega$ has $G_\omega$-gradient over the set $\calK$, that is,
\BEASN
\mathsf{B}_1 &\leq& \sum_{i=1}^m   \frac{G_\omega}{2}  \left\| (1-\xi)\bmx^\star -  \bmx_{i,1} \right\|^2 \\
&\leq& G_\omega  \sum_{i=1}^m   \left( \left\| (1-\xi)\bmx^\star  \right\|^2 + \left\| \bmx_{i,1} \right\|^2 \right) \\
&\leq& 2 m \overline{p}^2 G_\omega  \overline{R}^2.
\EEASN

Similarly, we have the following corollary that characterizes the average regularized regret for every node in terms of the total number of rounds $T$, by combining the results in Theorem \ref{theorem-bandit} and the preceding inequality.
\begin{corollary}\label{corollary-bandit}
\pr{
Under the conditions of Theorem \ref{theorem-bandit}, and taking
\BEASN
\eta &=&  \frac{1}{\overline{p}\,\overline{p}_\ast d \sqrt{T}} , \hspace{3em}  \rho_t = \bigo\left( \frac{1}{t^{3/2}} \right)  \nn\\
\delta &=& \frac{1}{\sqrt{T}}, \hspace{5.5em} \xi = \frac{\delta}{\underline{R}}, \qquad t\in[T]
\EEASN
we have that, for all $T\geq 1$ and any $j\in[m]$,
\BEASN
\overline{\reg}_j(T) &=&  \bigo\left( \frac{\overline{p}\,\overline{p}_\ast d}{\sqrt{T}} \right) .
\EEASN
}
\end{corollary}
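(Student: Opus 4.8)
The plan is to follow the same template as the proof of Corollary~\ref{corollary-main-cvx-2}: substitute the prescribed values of $\eta$, $\rho_t$, $\delta$, and $\xi$ into the seven-term bound of Theorem~\ref{theorem-bandit}, use the estimate $\mathsf{B}_1 \le 2m\overline{p}^2 G_\omega \overline{R}^2$ already derived just before the corollary, and control each term separately, retaining only the dependence on $T$ together with $d$ and the norm-equivalence constants $\overline{p},\overline{p}_\ast$ (all other quantities $m, G_\ell, G_r, G_\omega, \overline{R}, \underline{R}, \sigma_\omega, \vartheta, \kappa$ being absorbed into the $\mathcal{O}(\cdot)$ constant). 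First I would check the feasibility hypothesis $\delta \le \xi\underline{R}$ demanded by the theorem: since $\xi = \delta/\underline{R}$ this holds with equality, so the parameter choice is admissible.

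Next I would treat the two terms carrying the optimization errors $\rho_t$. Exactly as in Corollary~\ref{corollary-main-cvx-2}, $\rho_t = \mathcal{O}(t^{-3/2})$ gives $\sum_{t=1}^T\sqrt{\rho_t} = \mathcal{O}\big(\sum_{t=1}^T t^{-3/4}\big) = \mathcal{O}(T^{1/4})$ via the integral bound $\sum_{t=1}^T t^{-3/4} \le 4T^{1/4}$. Then $\frac{\mathsf{B}_3}{T}\sum_t\sqrt{\eta\rho_t} = \frac{\mathsf{B}_3\sqrt\eta}{T}\,\mathcal{O}(T^{1/4})$ and $\frac{\mathsf{B}_4}{T}\sum_t\sqrt{\rho_t/\eta} = \frac{\mathsf{B}_4}{T\sqrt\eta}\,\mathcal{O}(T^{1/4})$; inserting $\eta = (\overline{p}\,\overline{p}_\ast d\sqrt T)^{-1}$ shows both are of order $1/\sqrt T$ (indeed of strictly lower order in $d$), hence dominated by the target rate.

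The decisive step is the interplay between the two leading terms $\mathsf{B}_1/(\eta T)$ and $\mathsf{B}_2\eta$. The factor $(\overline{p}\,\overline{p}_\ast d)^2$ sitting inside $\mathsf{B}_2$ is inherited from the bound $\|\grade_{i,t}\|_\ast \le \overline{p}\,\overline{p}_\ast d G_\ell$ of Lemma~\ref{lemma-bandit-2}, and it is the reason the step size must now be taken proportional to $1/(\overline{p}\,\overline{p}_\ast d\sqrt T)$ rather than the $\mathcal{O}(1/\sqrt T)$ used in the full-information case. With this choice $\mathsf{B}_2\eta = \mathcal{O}(\overline{p}\,\overline{p}_\ast d/\sqrt T)$, and $\mathsf{B}_1/(\eta T)$ is of the same order in $T$ and $d$ once $\mathsf{B}_1 = \mathcal{O}(\overline{p}^2)$ is inserted. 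This is the single place where the extra dimension factor $d$ is actually generated, and I expect it to be the conceptual crux; everything else is routine.

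Finally, the two genuinely new terms relative to Theorem~\ref{corollary-main-cvx}, namely $\mathsf{B}_5\delta$ and $\mathsf{B}_6\xi$, are immediate: with $\delta = 1/\sqrt T$ and $\xi = 1/(\underline{R}\sqrt T)$ they are $\mathcal{O}(1/\sqrt T)$, hence no worse than the target. Collecting all seven contributions and keeping the largest — the $\mathsf{B}_0/T$ term being of strictly lower order in $T$ — yields $\expect[\overline{\reg}_j(T)] = \mathcal{O}(\overline{p}\,\overline{p}_\ast d/\sqrt T)$, which is the claim. The only care needed is to keep the $d$- and norm-constant bookkeeping consistent across terms, since, unlike in Corollary~\ref{corollary-main-cvx-2}, the dimension now appears explicitly in both the step size and the final rate; in the Euclidean case $\overline{p}=\overline{p}_\ast=1$ this collapses to $\mathcal{O}(d/\sqrt T)$, matching the centralized benchmark.
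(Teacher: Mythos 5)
Your proposal is correct and follows essentially the same route as the paper, which proves this corollary simply by substituting the stated parameters into the bound of Theorem~\ref{theorem-bandit} together with the estimate $\mathsf{B}_1 \le 2m\overline{p}^2 G_\omega \overline{R}^2$ and the integral bound $\sum_{t=1}^T t^{-3/4}\le 4T^{1/4}$ from Corollary~\ref{corollary-main-cvx-2}. Your identification of $\mathsf{B}_1/(\eta T)$ and $\mathsf{B}_2\eta$ as the terms dictating the $\overline{p}\,\overline{p}_\ast d/\sqrt{T}$ rate, and of the step-size rescaling by $1/(\overline{p}\,\overline{p}_\ast d)$ as the balancing mechanism, matches the paper's intent exactly.
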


\begin{remark}
In fact, we can construct a different gradient estimator by using only one single functional evaluation \cite{flaxman2005} or multiple functional evaluations \cite{agarwal2010}. However, building on our convergence analysis results it is easy to prove that algorithm with one-point bandit feedback would exhibit worse \dy{average regret guarantees (\eg, scaling as $1/T^{1/6}$)} and algorithm with multi-point bandit feedback would exhibit the same average regret scaling as that of {\sf BanODCMD}. Moreover, algorithm {\sf BanODCMD} has a natural connection with the distributed zeroth-order algorithms in \cite{yuan2015tnnls}, in the following sense: i) Both algorithms rely on two functional evaluations at every iteration; ii) {\sf BanODCMD} can tackle distributed composite optimization in online setting, while \cite{yuan2015tnnls} deals with off-line distributed optimization where the objective functions of nodes are fixed; and iii) The key difference is that in algorithm {\sf BanODCMD} the query points (\ie, $\bmx_{i,t} \pm \delta \bmu_{i,t}$) must lie in the decision space $\mathcal{K}$, which is guaranteed by projecting the estimates onto the shrunk set $(1-\xi)\mathcal{K}$ and choosing the exploration parameter $\delta$ and the shrinkage parameter $\xi$ appropriately. In contrast, such a requirement is not needed in distributed zeroth-order algorithms \cite{yuan2015tnnls}.
\end{remark}

\begin{remark}
\pr{
Compared to the algorithms \cite{hosseini2016tac,jadbabaie2018tac,lee2017tac,yan2013tkde} that are under full-information feedback, algorithm $\mathsf{BanODCMD}$ is the first online distributed algorithm under bandit feedback that only requires the information of functional values at two queried points. In addition, our $\mathsf{BanODCMD}$ algorithm can deal with distributed optimization problem with composite structure, while in \cite{hosseini2016tac,jadbabaie2018tac,lee2017tac,yan2013tkde} no regularization functions are considered in their objective functions.
}
\end{remark}


\begin{remark}
\pr{
It is worth noting that the average regularized regret of algorithm $\mathsf{BanODCMD}$ scales with the problem dimension as $\overline{p}\,\overline{p}_{\ast} d$, where $\overline{p}$ and $\overline{p}_{\ast}$ depend on the norm used. For example, in the case of Euclidean norm $\| \cdot \|_2$, we have $\overline{p}\,\overline{p}_{\ast}=1$ and the average regularized regret scales with the dimension as $\mathcal{O}(d)$. This dependence is identical to that of \cite{flaxman2005}, which considers centralized online bandit optimization. In the case of $\ell_p$ norm $\|\cdot\|_p$ with $p\geq 1$, it is easy to show that $\overline{p}\,\overline{p}_{\ast} \leq \sqrt{d}$, which results in an average regularized regret scaling of $\mathcal{O}(d\sqrt{d}/\sqrt{T})$. This scaling is better than the $\bigo( d^2 / \sqrt{T} )$ average regret scaling in \cite{agarwal2010}, but worse than that of \cite{shamir2017}, where the average regret scales optimally with dimension as $\sqrt{d}$; however, it is worth noting that the problem considered in \cite{agarwal2010,shamir2017} is in centralized setting and without composite structure.
}
\end{remark}

\subsection{Proof of Theorem \ref{theorem-bandit}}
We establish the proof of Theorem \ref{theorem-bandit}, by using the results in Lemmas \ref{lemma-bandit}, \ref{lemma-bandit-2} and \ref{lemma-flaxman}. We begin our proof by first showing that the decisions $\bmx_{i,t} \pm \delta \bmu_{i,t}$ belong to the constraint set $\calK$. It follows from step 3 in Algorithm \ref{alg-dcmd-bandit} that $\bmy_{i,t} \in (1-\xi) \calK$, which implies that $\bmx_{i,t+1} \in (1-\xi) \calK$. This, combined with Lemma \ref{lemma-flaxman}, gives the condition $\delta \leq \xi \underline{R}$ that ensures $\bmx_{i,t} \pm \delta \bmu_{i,t} \in \calK$.
We now prove the main results, by following an argument similar to that of Lemma \ref{theorem-basic-conv}, and we immediately have that for any $\bmx\in(1-\xi)\calK$,
\BEASN
\hspace{-0.5em}&&\left<  \grade_{i,t} + \grad r(\bmy_{i,t}^\star)  ,  \bmy_{i,t}^\star -   \bmx  \right>   +  \left<  \grade_{i,t} , \bmx_{i,t} -  \bmy_{i,t} \right>  \nn\\
\hspace{-0.5em}&\leq&  \frac{1}{\eta}  \left( \breg_\omega (\bmx , \bmx_{i,t})  -  \breg_\omega (\bmy_{i,t}, \bmx_{i,t})  - \breg_\omega (\bmx , \bmy_{i,t}) \right) \nn\\
\hspace{-0.5em}&&+ \frac{4 \overline{p} G_\omega \overline{R}}{\eta}  \| \bmy_{i,t} -  \bmy_{i,t}^\star \| \!+\! \frac{\sigma_\omega}{2\eta} \| \bmx_{i,t} -  \bmy_{i,t} \|^2 \!+\! \frac{\eta}{2\sigma_\omega} \|  \grade_{i,t} \|^2_\ast
\label{theorem-badit-1}
\EEASN
where we used the new bound $2 \overline{p} \overline{R}$ on the diameter of $\calK$, because of $\|  \bmx - \bmy \| \leq \| \bmx \| + \| \bmy \| \leq \overline{p} \left( \| \bmx \|_2 + \| \bmy \|_2 \right)  \leq 2 \overline{p} \overline{R} $. By adding and subtracting $\bmy_{i,t}^{\star}$ to the second term on the left-hand side and then taking the conditional expectation, we have for any $\bmx\in(1-\xi)\calK$,
\BEAS
&& \expect \left[ \left<  \grade_{i,t} + \grad r(\bmy_{i,t}^\star)  ,  \bmy_{i,t}^\star -   \bmx  \right> | \mathcal{F}_t\right] \nn\\
&&+  \expect \left[ \left<  \grade_{i,t} , \bmx_{i,t} -  \bmy_{i,t}^{\star} \right>  \mid \mathcal{F}_t\right] \nn\\
&&+ \expect \left[ \left<  \grade_{i,t} , \bmy_{i,t}^{\star} - \bmy_{i,t}  \right>  \mid \mathcal{F}_t\right] \nn\\
&=& \expect \left[ \left<  \grade_{i,t} ,  \bmx_{i,t} -   \bmx  \right> \mid \mathcal{F}_t\right] \nn\\
&&+ \expect \left[ \left<\grad r(\bmy_{i,t}^\star)  ,  \bmy_{i,t}^\star -   \bmx  \right> \mid \mathcal{F}_t\right] \nn\\
&&+ \expect \left[ \left<  \grade_{i,t} , \bmy_{i,t}^{\star} - \bmy_{i,t}  \right>  \mid \mathcal{F}_t\right]  \nn\\
&\geq& \breve{\ell}_{i,t}(\bmx_{i,t}) - \breve{\ell}_{i,t}(\bmx) + r(\bmy_{i,t}^\star) - r(\bmx) \nn\\
&&- \expect \left[ \| \grade_{i,t}\|_\ast \cdot \| \bmy_{i,t} -  \bmy_{i,t}^\star \| \mid \mathcal{F}_t\right] \nn\\
&\geq& \breve{\ell}_{i,t}(\bmx_{i,t}) - \breve{\ell}_{i,t}(\bmx) + r(\bmy_{i,t}^\star) - r(\bmx) \nn\\
&&- \overline{p}\, \overline{p}_\ast d G_\ell \| \bmy_{i,t} -  \bmy_{i,t}^\star \|
\label{theorem-badit-2}
\EEAS
where the first and second inequalities follow respectively from Lemma \ref{lemma-bandit}(c) and Lemma \ref{lemma-bandit-2}. Doing the same operation on the right-hand side and then combining with inequality (\ref{theorem-badit-2}), taking the total expectation, and summing over $i=1,\dots,m$, yields
\BEAS
&& \sum_{i=1}^m \expect \left[ \breve{\ell}_{i,t}(\bmx_{i,t})  + r(\bmy_{i,t}^\star) \right] \nn\\
&&- \sum_{i=1}^m\expect \left[ \breve{\ell}_{i,t}(\bmx) + r(\bmx)  \right] \nn\\
&\leq&  \frac{1}{\eta}  \sum_{i=1}^m \left( \breg_\omega (\bmx , \bmx_{i,t})  - \breg_\omega (\bmx , \bmy_{i,t})  \right) \nn\\
&&+ \left( \overline{p}\,\overline{p}_\ast d G_\ell + 4 \overline{p} G_\omega \overline{R} \frac{1}{\eta} \right)  \sum_{i=1}^m \| \bmy_{i,t} -  \bmy_{i,t}^\star \|   \nn\\
&& +   \frac{m (\overline{p}\,\overline{p}_\ast)^2 d^2 G_\ell^2}{2 \sigma_\omega} \eta
\label{theorem-badit-3}
\EEAS
where we used the same reasoning as that of (\ref{lemma-basic-conv-6}) (see Appendix \ref{appen:theorem:basic:conv}) and the bound on $\|  \grade_{i,t} \|_\ast$ (cf. Lemma \ref{lemma-bandit-2}).

Following similar lines as that of (\ref{lemma-basic-conv-7a}), (\ref{lemma-basic-conv-7b}), (\ref{theorem-main-cvx-2}) and (\ref{theorem-main-cvx-3}) (see Appendix \ref{appen:theorem:basic:conv}), we further have for any $\bmx\in\calK$,
\BEAS
&& \sum_{t=1}^T \sum_{i=1}^m \expect \left[ \breve{\ell}_{i,t}(\bmx_{j,t}) + r(\bmx_{j,t}) \right] \nn\\
&&- \sum_{t=1}^T \sum_{i=1}^m\expect \left[ \breve{\ell}_{i,t}\left( (1-\xi)\bmx \right) + r\left( (1-\xi)\bmx \right)  \right] \nn\\
&\leq&  \frac{1}{\eta} \sum_{i=1}^m  \expect \left[ \breg_\omega ((1-\xi)\bmx , \bmx_{i,1}) \right] +  \frac{m (\overline{p}\,\overline{p}_\ast)^2 d^2 G_\ell^2 }{2\sigma_\omega} \eta T \nn\\
&&+ \left( \overline{p}\,\overline{p}_\ast d G_\ell + 4 \overline{p} G_\omega \overline{R} \frac{1}{\eta} \right)  \sum_{t=1}^T \sum_{i=1}^m \expect \left[  \| \bmy_{i,t} -  \bmy_{i,t}^\star \| \right] \nn\\
&&+G_r  \sum_{t=1}^{T} \sum_{i=1}^{m}  \expect \left[  \| \bmy_{i,t}^\star -  \bmx_{i,t} \| \right]  \nn\\
&&+ \left(  G_\ell + G_r \right)  \sum_{t=1}^{T} \sum_{i=1}^{m} \expect \left[  \| \bmx_{i,t} -  \bmx_{j,t} \| \right]
\label{theorem-badit-4}
\EEAS
because function $\breve{\ell}_{i,t}$ is $G_\ell$-Lipschitz continuous. We now relate the left-hand side and the expected average regularized regret associated with the loss functions $\ell_{i,t}$, by Lemma \ref{lemma-bandit}(b),
\BEAS
- G_\ell \delta
&\leq&  \breve{\ell}_{i,t}(\bmx_{j,t}) - \ell_{i,t}(\bmx_{j,t})    \nn\\
- G_\ell \delta
&\leq&   \ell_{i,t}\left( (1-\xi)\bmx \right)  -  \breve{\ell}_{i,t}\left( (1-\xi)\bmx \right).
\label{theorem-badit-5}
\EEAS
We use the Lipschitz continuity of functions $\ell_{i,t}$ and $r$ to further obtain that for any $\bmx\in\calK$,
\BEAS
\ell_{i,t}\left( (1-\xi)\bmx \right)  -  \ell_{i,t}\left( \bmx \right)
&\leq& G_\ell \xi \| \bmx \| \leq \overline{p} G_\ell   \overline{R} \xi \nn\\
r\left( (1-\xi)\bmx \right) - r\left( \bmx \right)
&\leq& G_r \xi \| \bmx \| \leq \overline{p} G_r   \overline{R} \xi.
\label{theorem-badit-6}
\EEAS
Combining the results in inequalities (\ref{theorem-badit-4}), (\ref{theorem-badit-5}) and (\ref{theorem-badit-6}), and setting $\bmx = \bmx^\star$, we have
\dy{
\BEAS
&& \expect \left[  \overline{\reg}_j(T) \right]  \nn\\
&\leq&  \frac{1}{\eta T} \sum_{i=1}^m  \expect \left[ \breg_\omega ((1-\xi)\bmx^\star, \bmx_{i,1}) \right] +  \frac{m (\overline{p}\,\overline{p}_\ast)^2 d^2 G_\ell^2 }{2\sigma_\omega} \eta  \nn\\
&&+  2 m G_\ell \frac{1}{T} \sum_{t=1}^T  \delta +  m \overline{p} \left( G_\ell + G_r \right) \overline{R} \xi   \nn\\
&&+ \left( \overline{p}\,\overline{p}_\ast d G_\ell + 4 \overline{p} G_\omega \overline{R} \frac{1}{\eta} \right) \frac{1}{T}  \sum_{t=1}^T \sum_{i=1}^m \expect \left[  \| \bmy_{i,t} -  \bmy_{i,t}^\star \| \right] \nn\\
&&+G_r  \frac{1}{T} \sum_{t=1}^{T} \sum_{i=1}^{m}  \expect \left[  \| \bmy_{i,t}^\star -  \bmx_{i,t} \| \right]  \nn\\
&&+ \left(  G_\ell + G_r \right)  \frac{1}{T} \sum_{t=1}^{T} \sum_{i=1}^{m} \expect \left[  \| \bmx_{i,t} -  \bmx_{j,t} \| \right].
\label{theorem-badit-7}
\EEAS
}

Hence, we are left to bound the last three terms on the right-hand side of (\ref{theorem-badit-7}). In fact, they can be respectively bounded by using the results in Lemma \ref{lemma-disagreement}, and we can derive the desired bound in Theorem \ref{theorem-bandit}.
$\hfill\blacksquare$

\section{Simulation Results}
In this section, we consider a distributed online regularized linear regression problem:
\BEQ
\begin{array}{lll}
\mathrm{min.}    &  & \sum\limits_{t=1}^{T}\sum\limits_{i=1}^{m} \left( \frac{1}{2} \left( \left< \bm{b}_{i,t},\bmx \right> - y_{i,t} \right)^2 + \frac{\lambda_{1}}{2} \| \bmx \|_2^2 + \lambda_{2} \| \bmx \|_1 \right) \\
\mathrm{s.t.}  &  & \bmx \in \calK
\end{array}
\label{ridge-reg}
\EEQ
where the data sequence $ \{ (\bm{b}_{i,t} , y_{i,t}) \}_{t=1}^{T} $ is known only to node $i$, and every entry of the input vector $\bm{b}_{i,t}$ was generated uniformly from the interval $(-1,1)$ and the response is given by
$$y_{i,t} = \left< \bm{b}_{i,t},\bmx_{0} \right> + \varepsilon $$
where $[\bmx_{0}]_i = 1$ for $1 \leq i \leq \left\lfloor \frac{d}{2} \right\rfloor$ and $0$ otherwise and the noise $\varepsilon$ was generated independent and identically distributed from the normal distribution $N(0,1)$. We set $\calK = \{  \bmx\in\reals^d \mid \| \bmx \|_2 \leq \overline{R} \}$, and set the distance-measuring function as $\omega(\bmx) = \frac{1}{2} \| \bmx \|_2^2$. In this case the optimization problem $\bmy_{i,t}^\star = \mathop{\arg\min}_{\bmx\in\calK} \left< \grad_{i,t}, \bmx \right>  + r(\bmx) + \frac{1}{\eta} \breg_\omega (\bmx,\bmx_{i,t} )$ in $\mathsf{ODCMD}$ can be solved as follows:
\BEASN
\widehat{\bmy}_{i,t} &=&  \bmx_{i,t} - \eta\grad_{i,t} \nn\\
\left[ \widetilde{\bmy}_{i,t} \right]_s &=& \mathrm{sign}\left( \left[ \widehat{\bmy}_{i,t} \right]_s \right) \left[ \left| \left[ \widehat{\bmy}_{i,t} \right]_s \right| - \eta \lambda_2 \right]_+, \quad  s\in[d]\nn\\
\bmy_{i,t}^\star &=& \frac{\overline{R}}{\max\left( \| \widetilde{\bmy}_{i,t} \|_2 , \overline{R}  \right)} \widetilde{\bmy}_{i,t}
\EEASN
where $\grad_{i,t} =  \left( \left< \bm{b}_{i,t},\bmx_{i,t} \right> - y_{i,t} \right) \bm{b}_{i,t} + \lambda_{1}  \bmx_{i,t} $. Similarly, the optimization problem in $\mathsf{BanODCMD}$ (\ie, step 3) can be solved as well by replacing $\grad_{i,t}$ and $\overline{R}$ with $\grade_{i,t}$ and $(1-\xi)\overline{R}$ in the preceding equations, respectively.

Set $d = 10$, $\overline{R}=1$, $\lambda_1 = 1$, $\lambda_2 = 0.1$, and \pr{$\eta = \frac{1}{d \sqrt{T}}$}, and we first implement the algorithms over a randomly generated network of $m = 30$ nodes shown in Fig. 1. The network is changing according to the following way: at round $2t-1$ ($t\in[1,\lceil T/2 \rceil]$), half of the edges in the network (see Fig. 1) are activated randomly; at round $2t$ ($t\in[1,\lfloor T/2 \rfloor]$), the other half edges are activated. We solve the optimization problem in both algorithms by adding noise to $\bmy_{i,t}^\star$, that is, $\bmy_{i,t} = \bmy_{i,t}^\star + \rho_t \ones$, \dy{where $\rho_t = c_\rho \cdot \frac{1}{t^{3/2}}$ with $c_\rho \geq 0$ and $\ones$ is the vector with all its entries equal to one.}

\begin{figure}[htb]
\begin{center}
\rotatebox{360}{\scalebox{0.47}[0.45]{\includegraphics{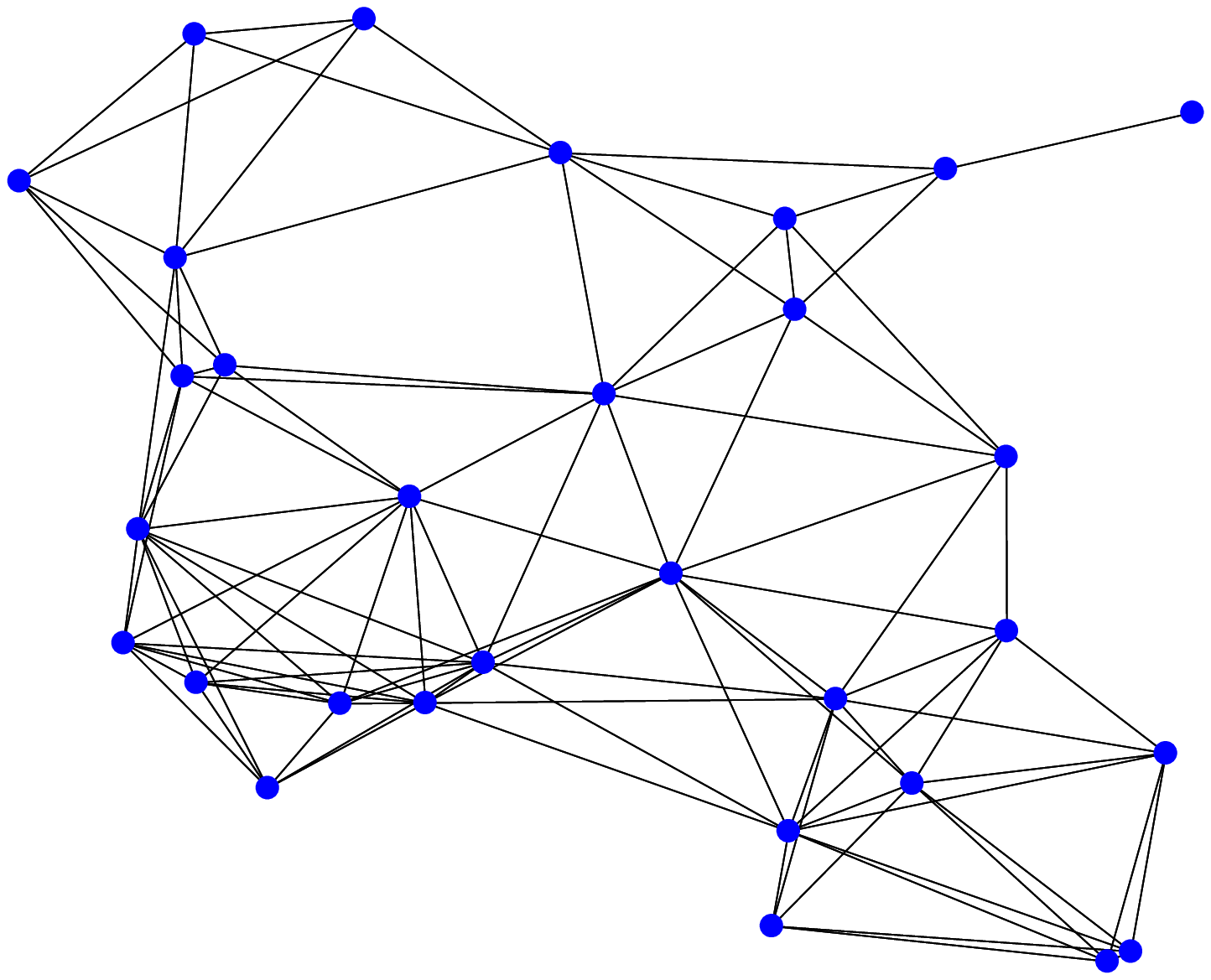}}}
\\[0pt]
{\normalsize {Fig. 1. A network of 30 nodes. } }
\end{center}
\end{figure}

We first consider solving problem (\ref{ridge-reg}) under full information feedback using $\mathsf{ODCMD}$. Let $c_\rho = 10$. We show the convergence performance of the algorithm by providing the plots of the maximum and minimum average regrets, defined respectively as \dy{$\max_{i\in [m]} \overline{\reg}_i(T)$ and $\min_{i\in [m]} \overline{\reg}_i(T) $}, versus the total number of rounds $T$. As a comparison, we also provide the convergence plots of a subgradient based algorithm; the details of the algorithm are as follows:
\begin{eqnarray*}
[ \widetilde{\bm{y}}_{i,t}  ]_s
&=&
\left\{
\begin{array}{l}
[ \bm{x}_{i,t} ]_s - \eta \left( [ \nabla_{i,t} ]_s + \lambda_2 \cdot {\rm sign} \left( [ \bm{x}_{i,t} ]_s \right)    \right),  \\
\hfill   {\rm if}\ [ \bm{x}_{i,t} ]_s \neq 0 \\
\null [ \bm{x}_{i,t} ]_s - \eta \left( [ \nabla_{i,t} ]_s + \lambda_2 \cdot c    \right), \hfill {\rm otherwise}
\end{array}
\right. \\
\bm{y}_{i,t}^{\star} &=&  \frac{\overline{R}}{\max\left(\| \widetilde{\bm{y}}_{i,t} \|_2 , \overline{R}  \right)} \widetilde{\bm{y}}_{i,t}
\end{eqnarray*}
where $c$ is an element of $[-1,1]$. \dy{The simulation results are shown in Fig. 2, which shows that algorithm $\mathsf{ODCMD}$ achieves better optimality than the subgradient based algorithm for each $T$.}

\begin{figure}[htb]
\begin{center}
\rotatebox{360}{\scalebox{0.47}[0.45]{\includegraphics{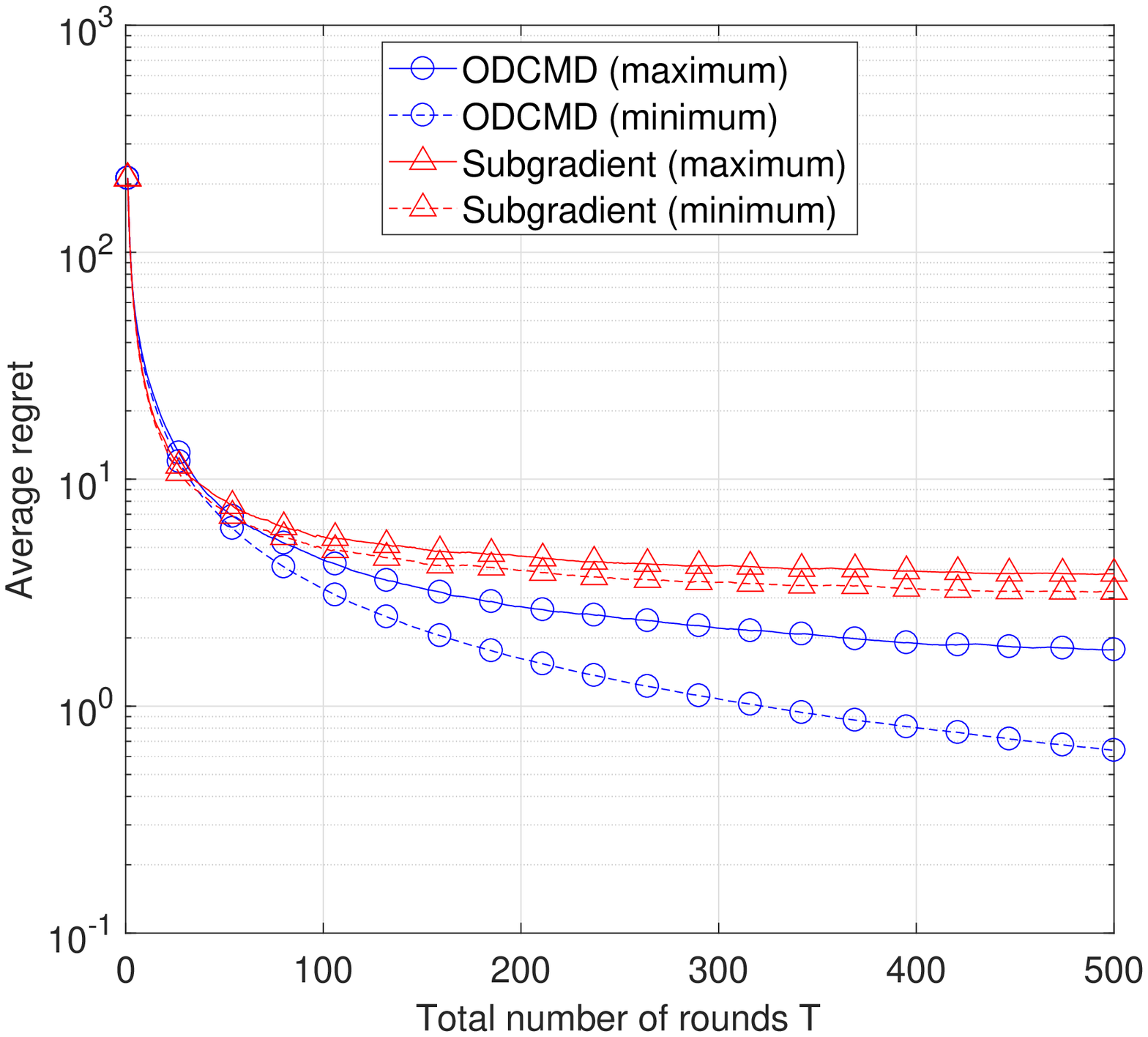}}}
\\[0pt]
{\normalsize {Fig. 2. The maximum and minimum average regrets versus the total number of rounds $T$ of $\mathsf{ODCMD}$ and subgradient based algorithm.  } }
\end{center}
\end{figure}

\dy{
We now study the effects of the optimization error sequence $\{\rho_t\}_{t=1}^{T}$ on the convergence of algorithm $\mathsf{ODCMD}$. Specifically, we first choose four different values of $c_\rho$ in $\rho_t$ in our simulations, \ie, $c_\rho = 0$ (error-free), $c_\rho = 10$, $c_\rho = 20$, and $c_\rho = 30$. As a comparison, we also consider two types of fixed optimization error sequences in simulations, namely, $\rho_t = 0.5$ and $\rho_t = \frac{10}{T^{3/2}}$ for all $t\in[T]$. The simulation results are shown in Fig. 3, which provides plots of the maximum average regret versus the total number of rounds $T$. It can be observed from Fig. 3 that: i) When the optimization error decreases as $\rho_t = \frac{c_\rho}{t^{3/2}}$, algorithm $\mathsf{ODCMD}$ achieves better optimality with a smaller value of $c_\rho$, and it is most obvious in the error-free case (\ie, $c_\rho = 0$); and ii) When the optimization error is fixed, algorithm $\mathsf{ODCMD}$ converges for the case of $\rho_t = \frac{10}{T^{3/2}}$, but when $\rho_t = 0.5$, algorithm $\mathsf{ODCMD}$ does not converge. All those observations comply with the results established in Theorem \ref{corollary-main-cvx}.
}

\begin{figure}[htb]
\begin{center}
\rotatebox{360}{\scalebox{0.47}[0.45]{\includegraphics{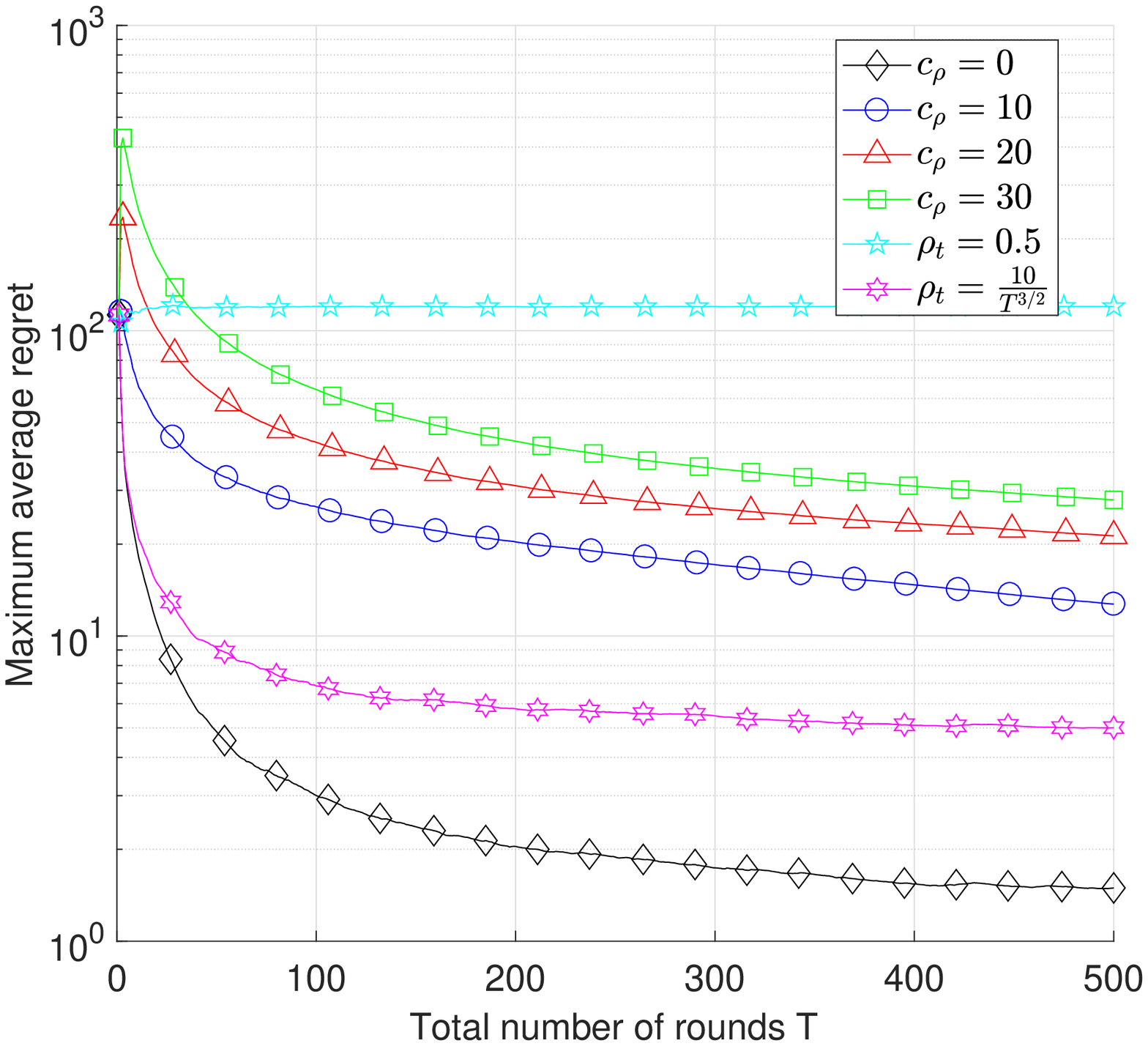}}}
\\[0pt]
{\normalsize {\dy{ Fig. 3. The maximum average regret versus the total number of rounds $T$ of $\mathsf{ODCMD}$ for different choices of $\rho_t$. } } }
\end{center}
\end{figure}

\dy{
We now make a comparison between algorithm $\mathsf{ODCMD}$ running with $\omega(\bmx) = \frac{1}{2} \| \bmx \|_2^2$ and algorithm $\mathsf{ODCMD}$ running with $\omega(\bmx) = \frac{1}{2} \| \bmx \|_p^2$. Let $\mathcal{K} = \reals^d$, $p = \frac{\ln (d)}{\ln(d)-1}$, $\lambda_1 =1$, $\lambda_2 = 0.1$, and $c_\rho = 10$. For the case of $p$-norm, step 3 in Algorithm 1 can be written explicitly as follows:
\BEASN
[ \nabla \omega(\bm{x}_{i,t}) ]_s &=& \frac{\mathrm{sign}\left( [ \bm{x}_{i,t} ]_s \right) | [ \bm{x}_{i,t} ]_s |^{p-1}}{\| \bm{x}_{i,t} \|_p^{p-2}}  \\
\widehat{\bm{y}}_{i,t} &=& \nabla \omega(\bm{x}_{i,t}) - \eta \nabla_{i,t} \\
\left[\overline{\bm{y}}_{i,t}\right]_s &=& \mathrm{sign}\left( [ \widehat{\bm{y}}_{i,t} ]_s \right)
\left[ | [ \widehat{\bm{y}}_{i,t} ]_s | - \eta \lambda \right]_+ \\
\left[\bm{y}_{i,t}^{\star}\right]_s &=& \frac{\mathrm{sign}\left( [ \overline{\bm{y}}_{i,t} ]_s \right) | [ \overline{\bm{y}}_{i,t} ]_s |^{q-1}}{\| \overline{\bm{y}}_{i,t} \|_q^{q-2}} ,  \qquad\qquad s\in[d].
\EEASN
The simulation results are presented in Fig. 4, which provides plots of the the maximum average regret versus $T$ of $\mathsf{ODCMD}$ using Euclidean distance and $\mathsf{ODCMD}$ using $p$-norm, for three different values of $d$, namely, $d=10$, $d=50$, and $d=100$. From Fig. 4 we observe that $p$-norm $\mathsf{ODCMD}$ achieves better optimality than $\mathsf{ODCMD}$ using Euclidean distance for each $T$, and this phenomenon is more obvious for a large value of $d$.
\begin{figure}[htb]
\begin{center}
\rotatebox{360}{\scalebox{0.414}[0.414]{\includegraphics{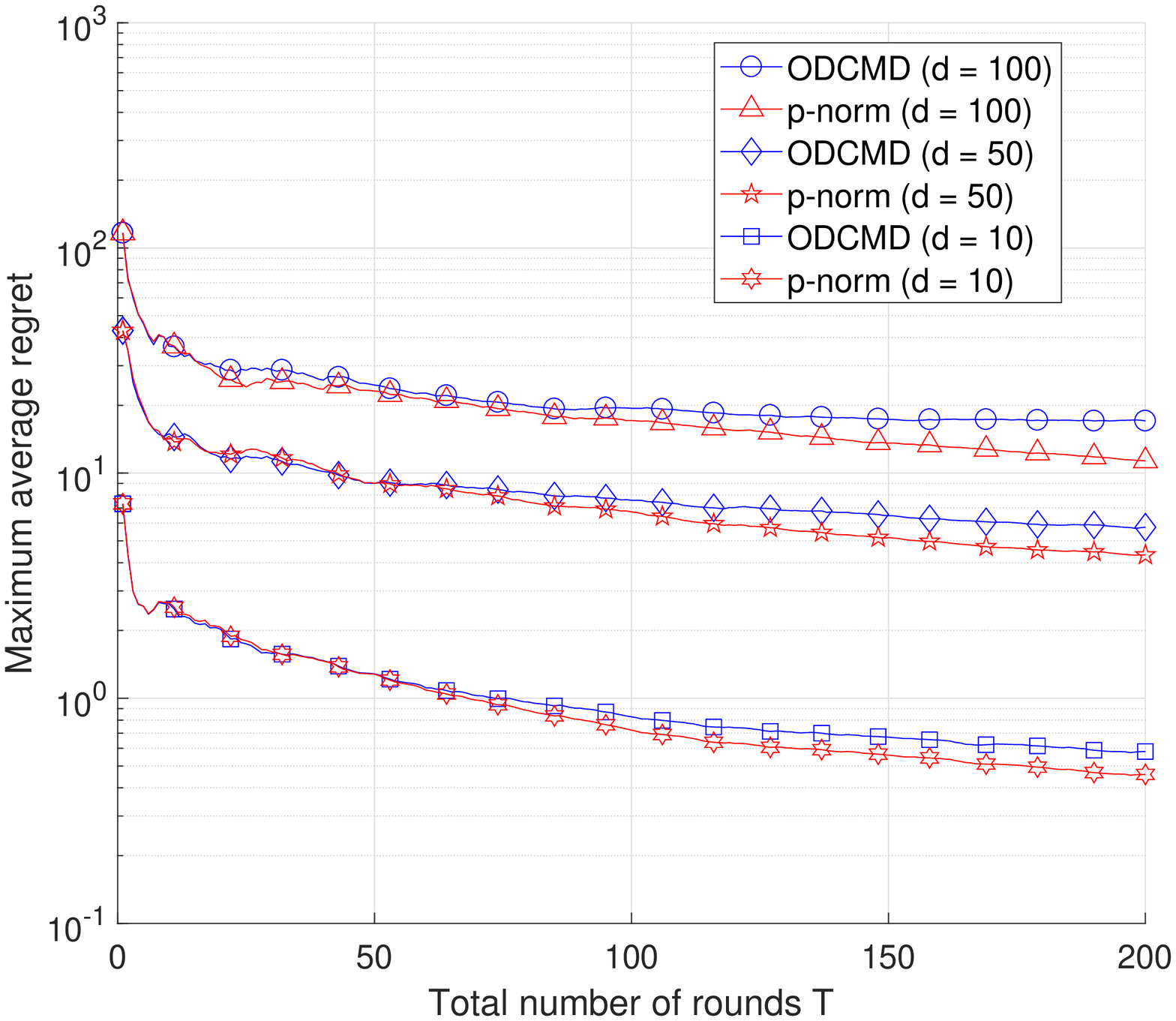}}}
\\[0pt]
{\normalsize {\dy{ Fig. 4. The maximum average regret versus the total number of rounds $T$ of $\mathsf{ODCMD}$ using Euclidean distance and $p$-norm $\mathsf{ODCMD}$ for three different choices of $d$. } } }
\end{center}
\end{figure}
}

We investigate the convergence performance of algorithm $\mathsf{BanODCMD}$. We use the same parameters as in the case of running algorithm $\mathsf{ODCMD}$ with Euclidean distance. Furthermore, let $\delta = \xi = \frac{1}{\sqrt{T}}$ and $c_\rho = 10$ in the simulations. First, we show the convergence of $\mathsf{BanODCMD}$ by providing the plots of the maximum and minimum average regrets of all the nodes versus the total number of rounds $T$. The simulation results in Fig. 5 show that both the the maximum and minimum average regrets of algorithm $\mathsf{BanODCMD}$ converges.

\begin{figure}[htb]
\begin{center}
\rotatebox{360}{\scalebox{0.47}[0.45]{\includegraphics{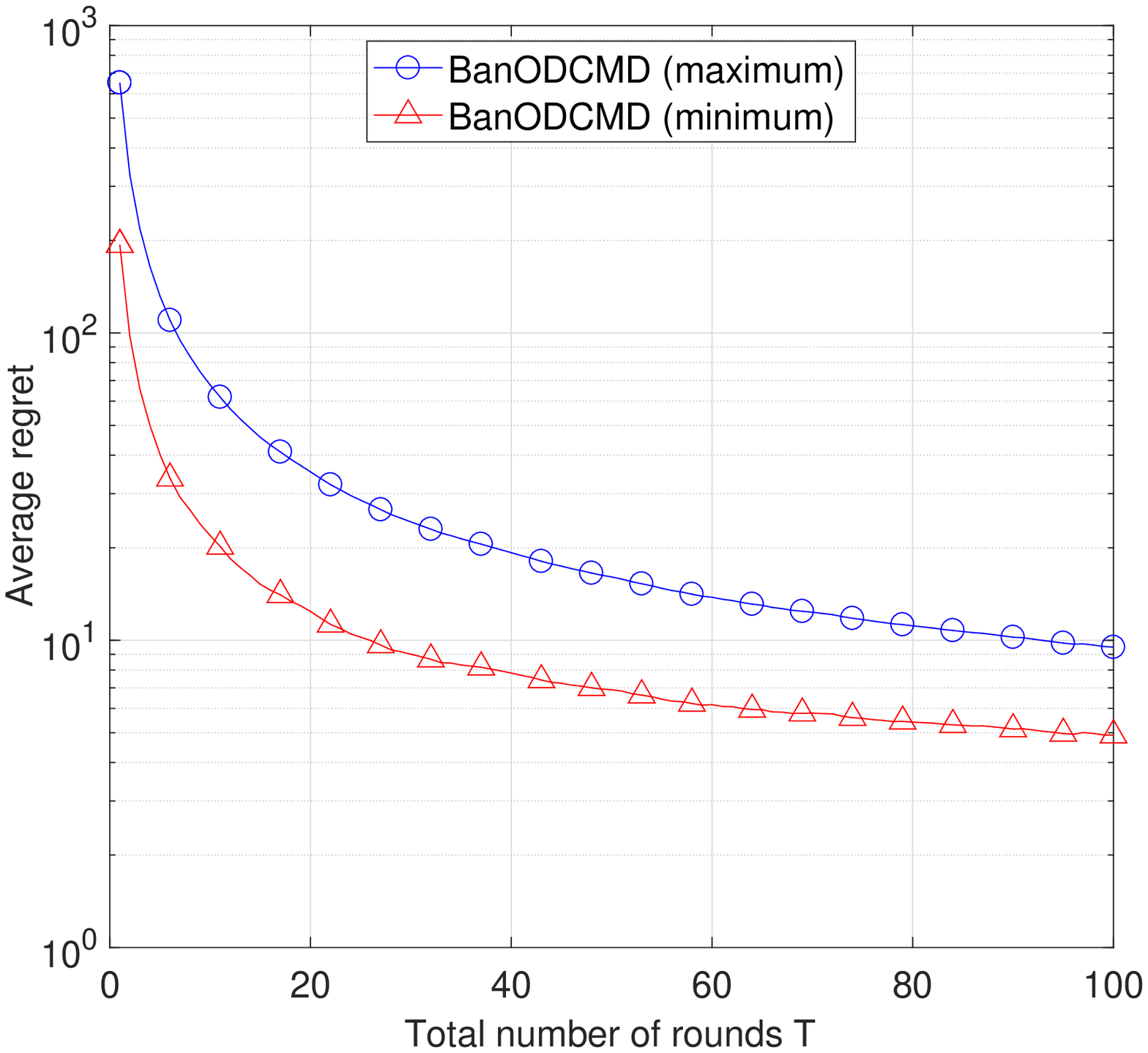}}}
\\[0pt]
{\normalsize {Fig. 5. The maximum and minimum average regrets versus the total number of rounds $T$ of $\mathsf{BanODCMD}$.  } }
\end{center}
\end{figure}

We next investigate the effect of the size of the network (\ie, number of nodes $m$) on the convergence of algorithm $\mathsf{BanODCMD}$. Specifically, we provide plots of the maximum average regrets versus the total number of rounds $T$, for three different number of nodes $m$ in a ring network, namely, $m=10$, $m=20$, and $m=30$, respectively. The simulation results are shown in Fig. 6, which reveals that algorithm $\mathsf{BanODCMD}$ achieves better optimality with a network of smaller size.

\begin{figure}[htb]
\begin{center}
\rotatebox{360}{\scalebox{0.47}[0.45]{\includegraphics{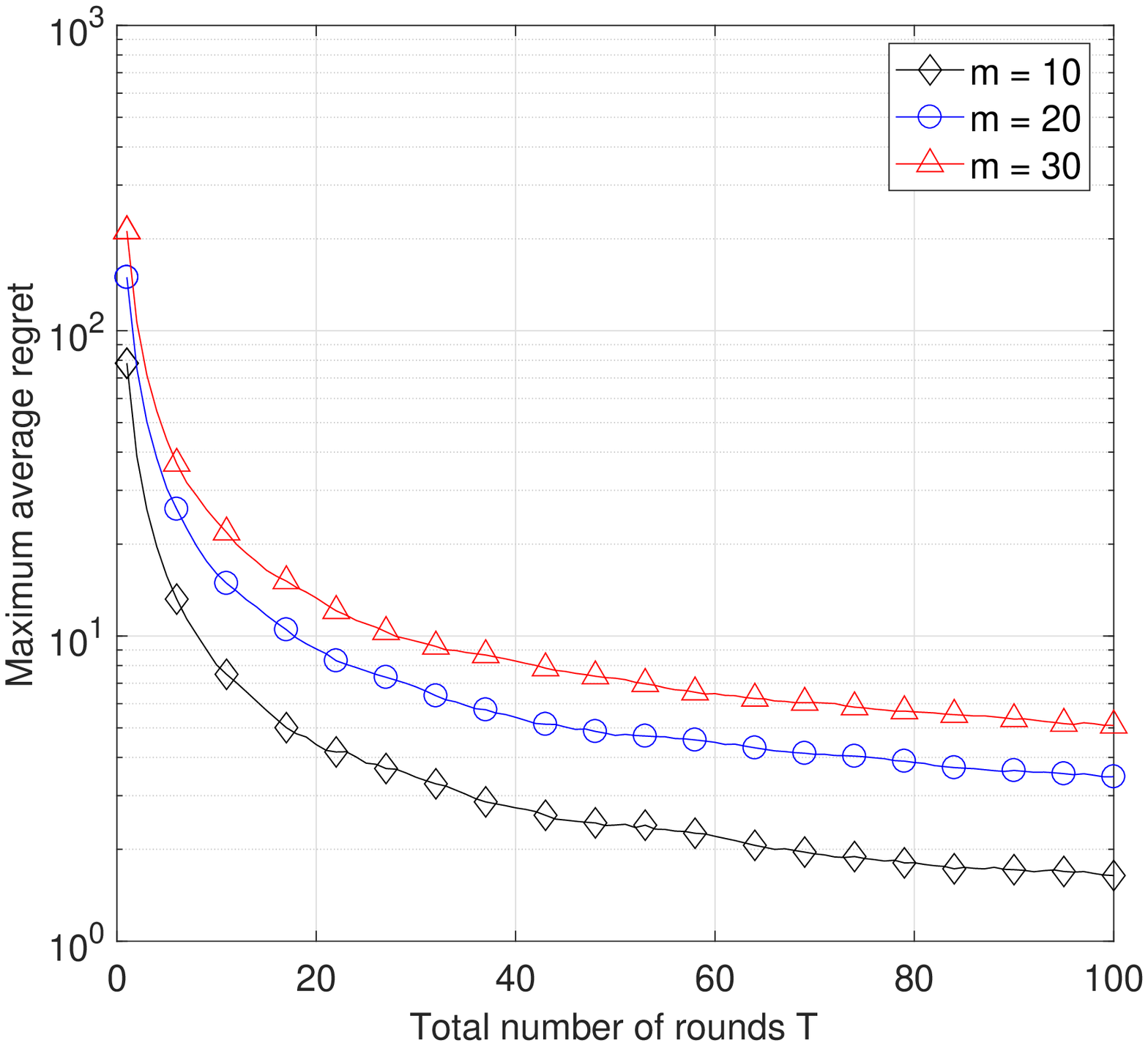}}}
\\[0pt]
{\normalsize {Fig. 6. The maximum average regret versus the total number of rounds $T$ of $\mathsf{BanODCMD}$ for three different number of nodes $m$ in a ring network.  } }
\end{center}
\end{figure}

We finally investigate the effects of the problem dimension $d$ on the convergence of algorithm {\sf BanODCMD}. Specifically, we provide plots of the maximum average regrets versus the total number of rounds $T$ for four different choices of the problem dimension $d$, \ie, $d=10$, $d=20$, $d=30$, and $d=40$. The simulation results are provided in Fig. 7, which clearly shows that algorithm $\mathsf{BanODCMD}$ achieves better optimality with small problem dimension $d$.
\pr{In addition, we have zoomed in on $T$ in the range $[60,100]$ and displayed the maximum average regret on a {\it linear} scale. It can be seen from Fig. 7 that the maximum average regret increases approximately linearly with increasing $d$. This is in compliance with the average regret scaling stated in Corollary 2, that is, $\mathcal{O} ( d/\sqrt{T} )$ ($\overline{p} = \overline{p}_{\ast} = 1$, due to the Euclidean norm).
}

\begin{figure}[htb]
\begin{center}
\rotatebox{360}{\scalebox{0.414}[0.414]{\includegraphics{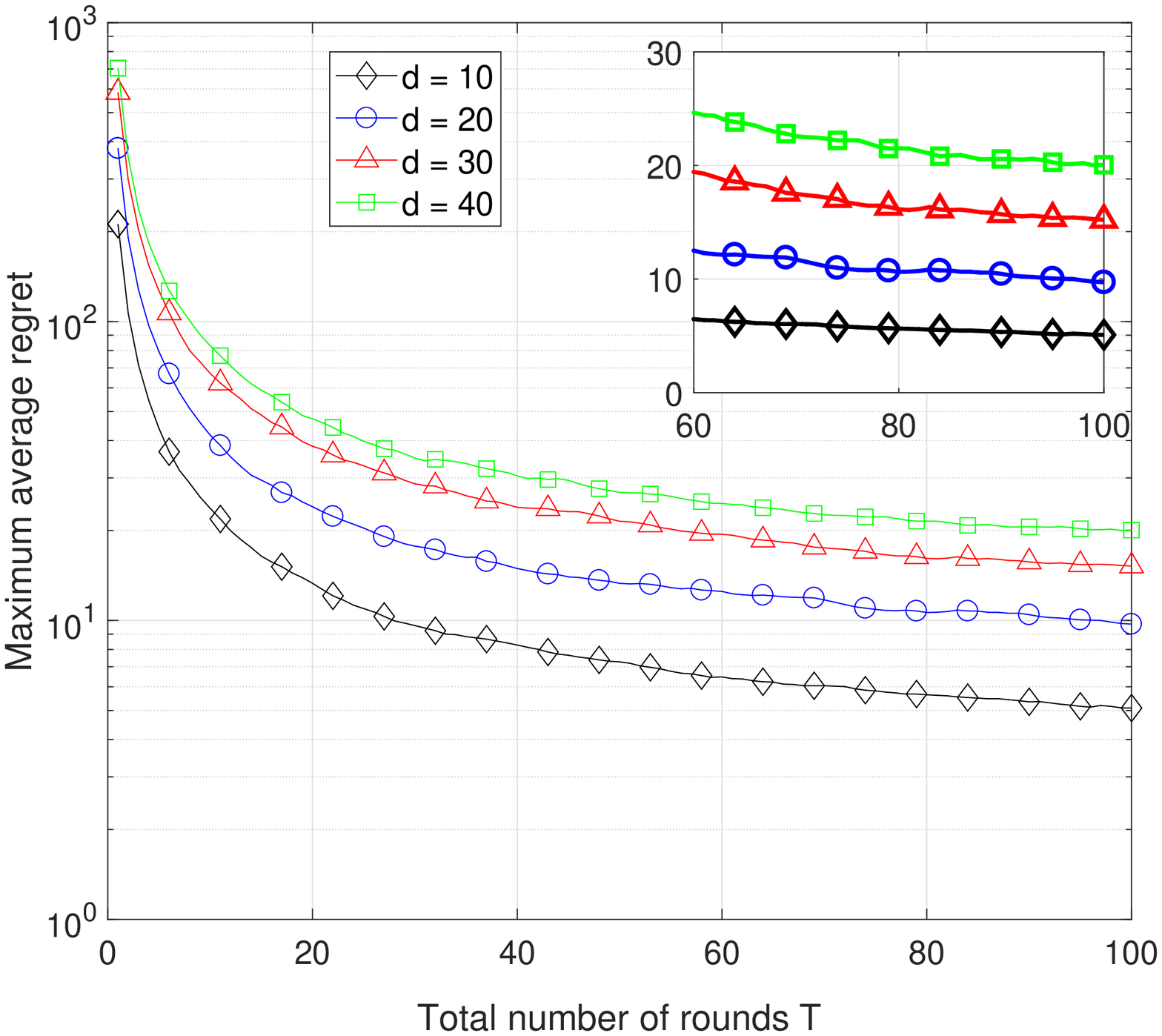}}}
\\[0pt]
{\normalsize {\pr{Fig. 7. The maximum average regret versus the total number of rounds $T$ of $\mathsf{BanODCMD}$ for four different choices of the problem dimension $d$.  } } }
\end{center}
\end{figure}


\section{Conclusion}
We have considered the problem of solving online distributed composite optimization over a network that consists of multiple interacting nodes. We have proposed two efficient online distributed composite mirror descent algorithms. The first algorithm has solved the problem under full-information feedback, and our second algorithm has solved the problem under bandit feedback where the information of the gradient is not available. For both algorithms, we have derived the \dy{average regularized regret is of order $\mathcal{O}(1/\sqrt{T})$}, which matches the previously known rates of centralized setting. We have also showed the effectiveness of our algorithms by implementing them over a distributed online regularized linear regression problem. This paper leaves several interesting questions. For example, it would be interesting to establish order-optimal upper and lower bounds for the proposed algorithms. For the case of bandit feedback, it would be of interest to obtain optimal dependence with the problem dimension $d$. Finally, it would be interesting to apply the algorithms to distributed optimization of different models.

\appendices
\section{Proof of Lemma \ref{theorem-basic-conv}}\label{appen:theorem:basic:conv}
Applying the first-order optimality condition, that is, $\left<  \grad f (\bmx^\star)  ,  \bmx -  \bmx^\star \right> \geq 0 $, for all $\bmx \in \calK$, where $\bmx^\star  =  \arg \min_{\bmx\in\calK} f (\bmx)$, to the optimization problem in step 3 in Algorithm \ref{alg-dcmd}, we can obtain that for all $\bmx\in\calK$,
\BEAS
\left<  \grad_{{}i,t} \!+\! \grad r(\bmy_{i,t}^\star) \!+\! \frac{1}{\eta} \!\! \left(  \grad\omega(\bmy_{i,t}^\star) \!-\!  \grad\omega(\bmx_{i,t}) \right) \!,\!  \bmx \!-\! \bmy_{i,t}^\star \right> \geq 0
\label{1st-order-induced}
\EEAS
since $\grad \breg_\omega (\bmx,\bmx_{i,t}) = \grad\omega(\bmx) -  \grad\omega(\bmx_{i,t})$.
By setting $\bmx = \bmx^\star$ in inequality (\ref{1st-order-induced}) it follows that
\BEASN
\left<  \grad_{i,t} \!+\! \grad r(\bmy_{i,t}^\star)  \!+\! \frac{1}{\eta} \left(  \grad\omega(\bmy_{i,t}^\star) - \grad\omega(\bmx_{i,t}) \right) , \bmy_{i,t}^\star - \bmx^\star  \right>  \leq 0
\EEASN
or equivalently,
\BEAS
&&\left<  \grad_{i,t} + \grad r(\bmy_{i,t}^\star)  ,  \bmy_{i,t}^\star -   \bmx^\star  \right>   \nn\\
&\leq& \frac{1}{\eta} \left<  \grad\omega(\bmy_{i,t}^\star) - \grad\omega(\bmx_{i,t}) , \bmx^\star -  \bmy_{i,t}^\star \right>.
\label{lemma-basic-conv-1}
\EEAS
By adding and subtracting $\bmy_{i,t}$ and using the Cauchy-Schwarz inequality, it follows that
\BEAS
&&\left<  \grad\omega(\bmy_{i,t}^\star) - \grad\omega(\bmx_{i,t}) , \bmx^\star -  \bmy_{i,t}^\star \right> \nn\\
&=& \left<  \grad\omega(\bmy_{i,t}^\star) - \grad\omega(\bmx_{i,t}) , \bmx^\star -  \bmy_{i,t} \right>  \nn\\
&&+ \left<  \grad\omega(\bmy_{i,t}^\star) - \grad\omega(\bmx_{i,t}) , \bmy_{i,t} -  \bmy_{i,t}^\star \right> \nn\\
&=& \left<  \grad\omega(\bmy_{i,t}) - \grad\omega(\bmx_{i,t}) , \bmx^\star -  \bmy_{i,t} \right>  \nn\\
&&+ \left<  \grad\omega(\bmy_{i,t}^\star) - \grad\omega(\bmy_{i,t}) , \bmx^\star -  \bmy_{i,t} \right>  \nn\\
&&+ \left<  \grad\omega(\bmy_{i,t}^\star) - \grad\omega(\bmx_{i,t}) , \bmy_{i,t} -  \bmy_{i,t}^\star \right> \nn\\
&\leq& \left<  \grad\omega(\bmy_{i,t}) - \grad\omega(\bmx_{i,t}) , \bmx^\star -  \bmy_{i,t} \right>  \nn\\
&&+ \| \bmx^\star -  \bmy_{i,t}  \| \cdot \| \grad\omega(\bmy_{i,t}^\star) - \grad\omega(\bmy_{i,t}) \|_\ast \nn\\
&&+ \| \bmy_{i,t} -  \bmy_{i,t}^\star  \| \cdot \| \grad\omega(\bmy_{i,t}^\star) - \grad\omega(\bmx_{i,t}) \|_\ast
\label{lemma-basic-conv-2}
\EEAS
which further yields the following bound, because $\omega$ has $G_\omega$-Lipschitz gradients (cf. Assumption \ref{assump-fcn-phi}),
\BEAS
&&\left<  \grad\omega(\bmy_{i,t}^\star) - \grad\omega(\bmx_{i,t}) , \bmx^\star -  \bmy_{i,t}^\star \right> \nn\\
&\leq& \left<  \grad\omega(\bmy_{i,t}) - \grad\omega(\bmx_{i,t}) , \bmx^\star -  \bmy_{i,t} \right>  \nn\\
&&+ G_\omega \left( \| \bmx^\star -  \bmy_{i,t}  \|  +  \| \bmy_{i,t}^\star -  \bmx_{i,t}  \| \right) \| \bmy_{i,t} -  \bmy_{i,t}^\star  \|  \nn\\
&\leq& \left<  \grad\omega(\bmy_{i,t}) - \grad\omega(\bmx_{i,t}) , \bmx^\star -  \bmy_{i,t} \right>  \nn\\
&&+ 2 G_\omega D_\calK \| \bmy_{i,t} -  \bmy_{i,t}^\star \|
\label{lemma-basic-conv-3}
\EEAS
where the last inequality follows from the fact that $\calK$ has finite diameter $D_\calK$. Combining the inequalities in (\ref{lemma-basic-conv-1}) and (\ref{lemma-basic-conv-3}), we get
\BEAS
&&\left<  \grad_{i,t} + \grad r(\bmy_{i,t}^\star)  ,  \bmy_{i,t}^\star -   \bmx^\star  \right>   +  \left<  \grad_{i,t} , \bmx_{i,t} -  \bmy_{i,t} \right>  \nn\\
&\leq& \frac{1}{\eta} \left<  \grad\omega(\bmy_{i,t}) - \grad\omega(\bmx_{i,t}) , \bmx^\star -  \bmy_{i,t} \right>  \nn\\
&&+ \frac{2}{\eta} G_\omega D_\calK  \| \bmy_{i,t} -  \bmy_{i,t}^\star \| + \left<  \grad_{i,t} , \bmx_{i,t} -  \bmy_{i,t}  \right>  \nn\\
&\leq&  \frac{1}{\eta}  \left( \breg_\omega (\bmx^\star , \bmx_{i,t})  -  \breg_\omega (\bmy_{i,t}, \bmx_{i,t})  - \breg_\omega (\bmx^\star , \bmy_{i,t}) \right) \nn\\
&&+ \frac{2}{\eta} G_\omega D_\calK   \| \bmy_{i,t} -  \bmy_{i,t}^\star \|
+ \frac{\sigma_\omega}{2\eta} \| \bmx_{i,t} -  \bmy_{i,t} \|^2  \nn\\
&&+ \frac{\eta}{2\sigma_\omega} \|  \grad_{i,t} \|^2_\ast
\label{lemma-basic-conv-4}
\EEAS
where the last inequality follows from the Pythagorean theorem for the Bregman divergence, \ie, $\left<  \grad \omega(\bmx) - \grad \omega(\bmz)  ,  \bmy  -  \bmz  \right>  =  \breg_\omega (\bmy,\bmz) +  \breg_\omega (\bmz,\bmx) -  \breg_\omega (\bmy,\bmx)  $ and the Fenchel-Young inequality, \ie, $\left< \bmx, \bmy\right> \leq a \| \bmx \|^2 + \frac{1}{a} \| \bmy \|^2_\ast$. Using the convexity of functions $\ell_{i,t}(\bmx)$ and $r(\bmx)$, the left-hand side of (\ref{lemma-basic-conv-4}) is lower bounded by
\BEAS
&&\left<  \grad_{i,t} + \grad r(\bmy_{i,t}^\star)  ,  \bmy_{i,t}^\star -   \bmx^\star  \right> +  \left<  \grad_{i,t} , \bmx_{i,t} -  \bmy_{i,t} \right>   \nn\\
&=& \left<  \grad_{i,t} + \grad r(\bmy_{i,t}^\star)  ,  \bmy_{i,t}^\star -   \bmx^\star  \right> +  \left<  \grad_{i,t} , \bmx_{i,t} -  \bmy_{i,t}^\star  \right>   \nn\\
&&  + \left<  \grad_{i,t} , \bmy_{i,t} -  \bmy_{i,t}^\star  \right> \nn\\
&\geq& \ell_{i,t}(\bmx_{i,t}) - \ell_{i,t}(\bmx^\star) + r(\bmy_{i,t}^\star) - r(\bmx^\star) \nn\\
&&- \| \bmy_{i,t} -  \bmy_{i,t}^\star \|  \cdot \| \grad_{i,t}  \|_\ast.
\label{lemma-basic-conv-5}
\EEAS
Combining the inequalities in (\ref{lemma-basic-conv-4}) and (\ref{lemma-basic-conv-5}) and using Assumption \ref{assump-fcn-obj}, we have
\BEAS
&&\ell_{i,t}(\bmx_{i,t})  + r(\bmy_{i,t}^\star)-\left(  \ell_{i,t}(\bmx^\star) + r(\bmx^\star)  \right) \nn\\
&\leq&  \frac{1}{\eta}  \left( \breg_\omega (\bmx^\star , \bmx_{i,t})  -  \breg_\omega (\bmy_{i,t}, \bmx_{i,t})  - \breg_\omega (\bmx^\star , \bmy_{i,t}) \right) \nn\\
&&+ \left( G_\ell +  \frac{2 }{\eta} G_\omega D_\calK \right)  \| \bmy_{i,t} -  \bmy_{i,t}^\star \|   +  \frac{\eta}{2\sigma_\omega} G_\ell^2 \nn\\
&&+ \frac{\sigma_\omega}{2\eta} \| \bmx_{i,t} -  \bmy_{i,t} \|^2  \nn\\
&\leq&  \frac{1}{\eta}  \left( \breg_\omega (\bmx^\star , \bmx_{i,t})  - \breg_\omega (\bmx^\star , \bmy_{i,t})  \right) \nn\\
&&+ \left( G_\ell +   \frac{2 }{\eta} G_\omega D_\calK \right)  \| \bmy_{i,t} -  \bmy_{i,t}^\star \|   +  \frac{G_\ell^2 }{2\sigma_\omega} \eta
\label{lemma-basic-conv-6}
\EEAS
due to the strong convexity of the Bregman divergence, \ie,
$$\breg_\omega (\bmy_{i,t} , \bmx_{i,t} )  \geq \frac{\sigma_\omega}{2}  \| \bmx_{i,t} -  \bmy_{i,t} \|^2.$$
Summing the inequalities in (\ref{lemma-basic-conv-6}) over $i = 1,\ldots,m$, gives
\BEAS
\hspace{-1.5em} &&\sum_{i=1}^{m} \left[ \ell_{i,t}(\bmx_{i,t})  + r(\bmy_{i,t}^\star)-\left(  \ell_{i,t}(\bmx^\star) + r(\bmx^\star)  \right) \right]  \nn\\
\hspace{-1.5em} &\leq&  \frac{1}{\eta} \sum_{i=1}^{m} \left( \breg_\omega (\bmx^\star , \bmx_{i,t})  - \breg_\omega (\bmx^\star , \bmy_{i,t})  \right)  \nn\\
\hspace{-1.5em} && + \left( G_\ell + \frac{2}{\eta} G_\omega D_\calK  \right)  \sum_{i=1}^{m} \| \bmy_{i,t} -  \bmy_{i,t}^\star \|   +  \frac{m G_\ell^2 }{2\sigma_\omega} \eta.
\label{lemma-basic-conv-7}
\EEAS
We now relate the left-hand side of (\ref{lemma-basic-conv-7}) and the average regularized regret (\ref{regularized-regret}). Using the Lipschitz continuity of function $\ell_{i,t}(\bmx)$ (cf. Assumption \ref{assump-fcn-obj}), we have
\BEAS
\ell_{i,t} (\bmx_{i,t}) &=& \ell_{i,t} (\bmx_{j,t}) + \ell_{i,t} (\bmx_{i,t}) - \ell_{i,t} (\bmx_{j,t}) \nn\\
&\geq& \ell_{i,t} (\bmx_{j,t}) - G_\ell \| \bmx_{i,t} -  \bmx_{j,t} \|
\label{lemma-basic-conv-7a}
\EEAS
and similarly, it follows from the Lipschitz continuity of function $r(\bmx)$ that
\BEAS
r(\bmy_{i,t}^\star) &=& r (\bmx_{j,t}) + r (\bmx_{i,t}) - r (\bmx_{j,t}) \nn\\
&&+ r(\bmy_{i,t}^\star) - r (\bmx_{i,t}) \nn\\
&\geq& r (\bmx_{j,t}) -  G_r \| \bmx_{i,t} -  \bmx_{j,t} \| \nn\\
&&- G_r \| \bmy_{i,t}^\star -  \bmx_{i,t} \|.
\label{lemma-basic-conv-7b}
\EEAS
Combining the preceding inequalities with (\ref{lemma-basic-conv-7}), using the definition of the average regularized regret in (\ref{regularized-regret}), and then summing the inequalities over all $t=1\cdots,T$, we arrive at
\BEAS
&&\sum_{t=1}^{T} \sum_{i=1}^{m} \left( \ell_{i,t}(\bmx_{j,t})  + r(\bmx_{j,t}) \right) \nn\\
&&- \sum_{t=1}^{T}  \sum_{i=1}^{m} \left(  \ell_{i,t}(\bmx^\star) + r(\bmx^\star)  \right)  \nn\\
&\leq&  \frac{1}{\eta} \sum_{t=1}^{T} \sum_{i=1}^{m} \left( \breg_\omega (\bmx^\star , \bmx_{i,t})  - \breg_\omega (\bmx^\star , \bmy_{i,t})  \right)  \nn\\
&&+ G_r  \sum_{t=1}^{T} \sum_{i=1}^{m}  \| \bmy_{i,t}^\star -  \bmx_{i,t} \| +  \frac{m G_\ell^2 }{2\sigma_\omega} \eta  T  \nn\\
&&+ \left( G_\ell + \frac{2}{\eta} G_\omega D_\calK  \right) \sum_{t=1}^{T} \sum_{i=1}^{m} \| \bmy_{i,t} -  \bmy_{i,t}^\star \|  \nn\\
&&+ \left( G_\ell + G_r \right)  \sum_{t=1}^{T} \sum_{i=1}^{m} \| \bmx_{i,t} -  \bmx_{j,t} \| .
\label{theorem-main-cvx-1}
\EEAS
We now bound the term $\sum_{i=1}^{m} \breg_\omega (\bmx^\star , \bmx_{i,t})$ as the following, by using the doubly stochasticity of the weight matrix $\wm(t-1)$ and Assumption \ref{assump-breg}, that is, for all $t\geq 2$,
\BEAS
\sum_{i=1}^{m} \breg_\omega (\bmx^\star , \bmx_{i,t})
&=& \sum_{i=1}^{m} \breg_\omega \left( \bmx^\star, \sum_{j=1}^{m} [\wm(t-1)]_{ij} \bmy_{j,t-1} \right) \nn\\
&\leq&    \sum_{i=1}^{m} \sum_{j=1}^{m} [\wm(t-1)]_{ij} \breg_\omega (\bmx^\star , \bmy_{j,t-1})  \nn\\
&=&   \sum_{j=1}^{m}  \breg_\omega (\bmx^\star , \bmy_{j,t-1}).
\label{theorem-main-cvx-2}
\EEAS
Hence, the first term on the right-hand side of (\ref{theorem-main-cvx-1}) leads to a telescopic sum, that is,
\BEAS
&&\sum_{t=1}^{T} \sum_{i=1}^{m} \left( \breg_\omega (\bmx^\star , \bmx_{i,t})  - \breg_\omega (\bmx^\star , \bmy_{i,t})  \right)    \nn\\
&\leq& \sum_{i=1}^{m} \left( \breg_\omega (\bmx^\star , \bmx_{i,1})  - \breg_\omega (\bmx^\star , \bmy_{i,1})  \right) \nn\\
&&+ \sum_{t=2}^{T} \sum_{i=1}^{m} \left( \breg_\omega (\bmx^\star , \bmy_{i,t-1})  - \breg_\omega (\bmx^\star , \bmy_{i,t})  \right)  \nn\\
&=& \sum_{i=1}^{m} \left( \breg_\omega (\bmx^\star , \bmx_{i,1}) - \breg_\omega (\bmx^\star , \bmy_{i,T}) \right)
\label{theorem-main-cvx-3}
\EEAS
which, combined with (\ref{theorem-main-cvx-1}), gives
\dy{
\BEASN
\overline{\reg}_{j}(T)
&\leq&  \frac{1}{\eta T} \sum_{i=1}^{m} \breg_\omega (\bmx^\star , \bmx_{i,1}) +  \frac{m G_\ell^2 }{2\sigma_\omega} \eta \nn\\
&&+ G_r \frac{1}{T} \sum_{t=1}^{T} \sum_{i=1}^{m}  \| \bmy_{i,t}^\star -  \bmx_{i,t} \|  \nn\\
&&+ \left( G_\ell +  \frac{2 }{\eta} G_\omega D_\calK \right) \frac{1}{T} \sum_{t=1}^{T} \sum_{i=1}^{m} \| \bmy_{i,t} -  \bmy_{i,t}^\star \|  \nn\\
&&+ \left( G_\ell + G_r \right)  \frac{1}{T} \sum_{t=1}^{T} \sum_{i=1}^{m} \| \bmx_{i,t} -  \bmx_{j,t} \|
\EEASN
}
with recalling the definition of the average regularized regret and dropping the negative term $-\sum_{i=1}^{m} \breg_\omega (\bmx^\star , \bmy_{i,T}) $.
$\hfill\blacksquare$

\section{Proof of Lemma \ref{lemma-disagreement}}\label{appen:lemma:disagreement}
(a) To facilitate the analysis, we introduce three auxiliary variables for all $i\in[m]$ and $t\in[T]$ as follows:
\BEAS
\overline{\bmx}_t &=& \frac{1}{m} \sum_{i=1}^{m} \bmx_{i,t} \\
\bme_{i,t} &=& \bmy_{i,t} - \bmy_{i,t}^\star
\label{theorem-main-convex-6-aux-1}      \\
\bm{\epsilon}_{i,t} &=& \bmy_{i,t}^\star - \bmx_{i,t}.
\label{lemma-disagreement-1}
\EEAS
We first utilize the fact that function $\left< \grad_{i,t}, \bmx \right>  + r(\bmx) + \frac{1}{\eta} \breg_\omega (\bmx,\bmx_{i,t} ) $ is $\frac{\sigma_\omega}{\eta}$-strongly convex to obtain
\BEAS
&& \left< \grad_{i,t}, \bmy_{i,t} \right>  + r(\bmy_{i,t}) + \frac{1}{\eta} \breg_\omega (\bmy_{i,t},\bmx_{i,t} ) \nn\\
&\geq& \left< \grad_{i,t}, \bmy_{i,t}^\star \right>  + r(\bmy_{i,t}^\star ) + \frac{1}{\eta} \breg_\omega (\bmy_{i,t}^\star ,\bmx_{i,t} )  \nn\\
&&+ \frac{\sigma_\omega}{2 \eta}  \| \bmy_{i,t} -  \bmy_{i,t}^\star \|^2
\label{lemma-disagreement-7}
\EEAS
which, combined with step 3 in Algorithm \ref{alg-dcmd}, yields
\BEAS
\| \bme_{i,t} \| = \| \bmy_{i,t} -  \bmy_{i,t}^\star \|  &\leq& \sqrt{  \frac{2}{\sigma_\omega } \eta \rho_t } .
\label{lemma-disagreement-8}
\EEAS

(b) We now turn our attention to the term $\| \bmy_{i,t}^\star -  \bmx_{i,t} \|$. By setting $\bmx = \bmx_{i,t}$ in the first-order optimality condition in (\ref{1st-order-induced}) it follows that
\BEASN
&&\left<  \grad_{{}i,t} + \grad r(\bmy_{i,t}^\star) ,  \bmx_{i,t} - \bmy_{i,t}^\star \right>  \nn\\
&\geq& \left<  \frac{1}{\eta} \left(  \grad\omega(\bmy_{i,t}^\star) -  \grad\omega(\bmx_{i,t}) \right) , \bmy_{i,t}^\star - \bmx_{i,t} \right>  \nn\\
&\geq& \frac{\sigma_\omega}{\eta} \| \bmy_{i,t}^\star - \bmx_{i,t} \|^2
\label{lemma-disagreement-9}
\EEASN
because function $\omega$ is $\sigma_\omega$-strongly convex, and then applying the Cauchy-Schwarz inequality to the left-hand side, we have
\BEASN
\frac{\sigma_\omega}{\eta} \| \bmy_{i,t}^\star - \bmx_{i,t} \|^2 &\leq&  \| \bmy_{i,t}^\star - \bmx_{i,t} \| \cdot \| \grad_{{}i,t} + \grad r(\bmy_{i,t}^\star) \|_\ast
\label{lemma-disagreement-10}
\EEASN
which gives
\BEAS
\| \bm{\epsilon}_{i,t} \| = \| \bmy_{i,t}^\star - \bmx_{i,t} \| &\leq& \frac{1}{\sigma_\omega} \left(  G_\ell + G_r \right) \eta
\label{lemma-disagreement-11}
\EEAS
according to Assumption \ref{assump-fcn-obj}.

(c) We first derive the general iteration relation of the states $\bmx_{i,t+1}$,
\BEAS
\bmx_{i,t+1} &=& \sum_{j=1}^{m} [\wm(t)]_{ij} \bmy_{j,t} \nn\\
&=& \sum_{j=1}^{m} [\wm(t)]_{ij} \left(  \bmy_{j,t}^\star +  \bme_{j,t} \right)  \nn\\
&=& \sum_{j=1}^{m} [\wm(t)]_{ij} \left(  \bmx_{j,t} +  \bme_{j,t} + \bm{\epsilon}_{j,t} \right).
\label{lemma-disagreement-2}
\EEAS
Applying this inequality recursively, we find that for all $t\geq1$,
\BEAS
\bmx_{i,t+1} &=& \sum_{j=1}^{m} [\wm(t:1)]_{ij}  \bmx_{j,1} \nn\\
&&+ \sum_{\tau=1}^{t} \sum_{j=1}^{m}  [\wm(t:\tau)]_{ij} \left(  \bme_{j,\tau} + \bm{\epsilon}_{j,\tau} \right)
\label{lemma-disagreement-3}
\EEAS
where we write $\wm(t:\tau) = \wm(t) \wm(t-1) \cdots \wm(\tau+1) \wm(\tau)$ and $\wm(t:t) = \wm(t)$ for all $t\geq\tau\geq 1$. We now characterize the general iteration for the average states of the network, that is,
\BEASN
\overline{\bmx}_{t+1} &=& \frac{1}{m} \sum_{i=1}^{m} \bmx_{i,t+1} \nn\\
&=&  \frac{1}{m} \sum_{i=1}^{m}   \sum_{j=1}^{m} [\wm(t)]_{ij} \bmy_{j,t}  \nn\\
&=& \frac{1}{m} \sum_{i=1}^{m} \bmy_{i,t} \nn\\
&=& \frac{1}{m} \sum_{i=1}^{m} \left( \bmx_{i,t} +  \bme_{i,t} + \bm{\epsilon}_{i,t} \right)
\label{lemma-disagreement-4}
\EEASN
where the second-to-last equality follows from the double stochasticity of $\wm(t)$, and the last equality follows from equation (\ref{lemma-disagreement-2}). Applying this inequality recursively, we find that for all $t\geq1$,
\BEAS
\overline{\bmx}_{t+1} &=& \overline{\bmx}_{1} + \sum_{\tau=1}^{t} \frac{1}{m} \sum_{i=1}^{m}  \left( \bme_{i,\tau} + \bm{\epsilon}_{i,\tau} \right).
\label{lemma-disagreement-5}
\EEAS
Hence, for all $t\geq 1$ and any $i\in[m]$,
\BEAS
&&\| \bmx_{i,t+1} - \overline{\bmx}_{t+1} \|   \nn\\
&\leq& \sum_{j=1}^{m} \left| [\wm(t:1)]_{ij} - \frac{1}{m} \right| \cdot \| \bmx_{j,1} \|   \nn\\
&&+\sum_{\tau=1}^{t} \sum_{j=1}^{m} \left| [\wm(t:\tau)]_{ij} \!-\! \frac{1}{m} \right|  \left( \| \bme_{j,\tau} \|  \!+\! \| \bm{\epsilon}_{j,\tau} \| \right). \nn\\
\label{lemma-disagreement-6}
\EEAS
We now bound the norm of the differences of the estimates among nodes in the network, by combining the inequalities (\ref{lemma-disagreement-6}), (\ref{lemma-disagreement-8}) and (\ref{lemma-disagreement-11}) with Corollary 1 in \cite{nedic2008cdc} on the convergence properties of the matrix $\wm(t:\tau)$ for all $t\geq\tau\geq 1$, that is,
\BEASN
\left| [\wm(t:\tau)]_{ij} - \frac{1}{m} \right| \leq \vartheta \kappa^{t-\tau}.
\EEASN
Specifically, we have
\BEAS
\hspace{-1em} \sum_{i=1}^{m} \| \bmx_{i,t+1} - \overline{\bmx}_{t+1} \|
&\leq& \vartheta \left(\sum_{i=1}^{m} \| \bmx_{i,1} \| \right) \kappa^{t-1} \nn\\
&&\hspace{-1em}+ m \vartheta  \frac{ G_\ell + G_r }{\sigma_\omega} \eta \sum_{\tau=1}^{t} \kappa^{t-\tau}  \nn\\
&&\hspace{-1em}+ m \vartheta \sqrt{  \frac{2}{\sigma_\omega } } \sum_{\tau=1}^{t} \kappa^{t-\tau} \sqrt{\eta \rho_t }.
\label{lemma-disagreement-12}
\EEAS
On the other hand, one has
\BEASN
&&\sum_{t=1}^{T} \sum_{i=1}^{m} \| \bmx_{i,t} - \overline{\bmx}_{t} \| \nn\\
&=& \sum_{i=1}^{m} \| \bmx_{i,1} - \overline{\bmx}_{1} \| + \sum_{t=1}^{T-1} \sum_{i=1}^{m} \| \bmx_{i,t+1} - \overline{\bmx}_{t+1} \|  \nn\\
&\leq& \sum_{t=1}^{T} \sum_{i=1}^{m} \| \bmx_{i,t+1} - \overline{\bmx}_{t+1} \|
\EEASN
where the last inequality follows from the fact that $\bmx_{i,1}$ are the same for all $i$. Combining (\ref{lemma-disagreement-12}) with the preceding inequality, we get
\BEASN
\sum_{t=1}^{T} \sum_{i=1}^{m} \| \bmx_{i,t} - \overline{\bmx}_{t} \|
&\leq& \vartheta \left(\sum_{i=1}^{m} \| \bmx_{i,1} \| \right) \sum_{t=1}^{T} \kappa^{t-1} \nn\\
&&+  m \vartheta  \frac{ G_\ell + G_r }{\sigma_\omega} \eta \sum_{t=1}^{T} \sum_{\tau=1}^{t} \kappa^{t-\tau}  \nn\\
&&+ m \vartheta  \sqrt{  \frac{2}{\sigma_\omega } } \sum_{t=1}^{T} \sum_{\tau=1}^{t} \kappa^{t-\tau} \sqrt{\eta \rho_t }.
\EEASN
Therefore, the desired estimate follows by applying the following inequalities, that is,
\BEASN
\sum_{t=1}^{T}  \sum_{\tau=1}^{t} \kappa^{t-\tau}  &\leq& \sum_{t=1}^{T}  \left( \sum_{\tau=1}^{\infty} \kappa^{\tau} \right) \leq \frac{1}{1-\kappa} T
\EEASN
and
\BEASN
\sum_{t=1}^{T}  \sum_{\tau=1}^{t} \kappa^{t-\tau} \sqrt{\eta \rho_t } &\leq& \sum_{t=1}^{T} \!\!\left( \sum_{\tau=1}^{\infty} \kappa^{\tau} \right) \sqrt{\eta \rho_t } \\
&\leq& \frac{1}{1-\kappa} \sum_{t=1}^{T}\sqrt{\eta \rho_t}
\EEASN
to the preceding one, with the help of the triangle inequality $\| \bmx_{i,t} - \bmx_{j,t} \| \leq \| \bmx_{i,t} - \overline{\bmx}_{t} \| + \| \bmx_{j,t} - \overline{\bmx}_{t} \|$. The proof is complete.
$\hfill\blacksquare$



\begin{IEEEbiography}
{Deming Yuan} (M'17) received the B.Sc. degree in electrical engineering and automation and the Ph.D. degree in control science and engineering from the Nanjing University of Science and Technology, Nanjing, China, in 2007 and 2012, respectively.

He is currently a Professor with the School of Automation, Nanjing University of Science and Technology, Nanjing. He received the Australia Award Endeavour Research Fellowship to further his research on distributed optimization, and was hosted by the Australian National University, in 2018. He has been on the editorial board of \emph{Transactions of the Institute of Measurement and Control}. His current research interests include algorithms for distributed optimization and control.
\end{IEEEbiography}

\begin{IEEEbiography}
{Yiguang Hong} (F'17) received his B.S. and M.S. degrees from Peking University, China, and the Ph.D. degree from the Chinese Academy of Sciences (CAS), China. He is currently a Professor in Academy of Mathematics and Systems Science, CAS, and serves as the Director of Key Lab of Systems and Control, CAS and the Director of the Information Technology Division, National Center for Mathematics and Interdisciplinary Sciences, CAS.  His current research interests include nonlinear control, multi-agent systems, distributed optimization/game, machine learning, and social networks.

Prof. Hong serves as Editor-in-Chief of Control Theory and Technology and Deputy Editor-in-Chief of Acta Automatica Sinca. He also serves or served as Associate Editors for many journals, including the IEEE Transactions on Automatic Control, IEEE Transactions on Control of Network Systems, and IEEE Control Systems Magazine. He is a recipient of the Guang Zhaozhi Award at the Chinese Control Conference, Young Author Prize of the IFAC World Congress, Young Scientist Award of CAS, the Youth Award for Science and Technology of China, and the National Natural Science Prize of China.  He is a Fellow of IEEE.
\end{IEEEbiography}

\begin{IEEEbiography}
{Daniel W.C. Ho} (F'17) received the B.S., M.S., and Ph.D. degrees in mathematics from the University of Salford, Greater Manchester, U.K., in 1980, 1982, and 1986, respectively.

From 1985 to 1988, he was a Research Fellow with the Industrial Control Unit, University of Strathclyde, Glasgow, U.K. In 1989, He joined City University of Hong Kong, Hong Kong. He is currently a Chair Professor in applied mathematics with the Department of Mathematics, and the Associate Dean with the College of Science.  His current research interests include control and estimation theory, complex dynamical distributed networks, multi-agent networks, and stochastic systems. He has over 230 publications in scientific journals. Prof. Ho is a Fellow of the IEEE. He was honored to be the Chang Jiang Chair Professor awarded by the Ministry of Education, China, in 2012. He has been on the editorial board of a number of journals including \emph{IEEE Transactions on Neural Networks and Learning Systems}, \emph{IET Control Theory and its Applications}, \emph{Journal of the Franklin Institute} and \emph{Asian Journal of Control}. He is named as ISI Highly Cited Researchers (2014-2019) in Engineering by Clarivate Analytics.
\end{IEEEbiography}

\begin{IEEEbiography}
{Shengyuan Xu} received his B.Sc. degree from the Hangzhou Normal University, China in 1990, M.Sc. degree from the Qufu Normal University, China in 1996, and Ph.D. degree from the Nanjing University of Science and Technology, China 1999. From 1999 to 2000 he was a Research Associate in the Department of Mechanical Engineering at the University of Hong Kong, Hong Kong. From December 2000 to November 2001, and December 2001 to September 2002, he was a Postdoctoral Researcher in CESAME at the Universit\`{e} catholique de Louvain, Belgium, and the Department of Electrical and Computer Engineering at the University of Alberta, Canada, respectively. From September 2002 to September 2003, and September 2003 to September 2004, he was a William Mong Young Researcher and an Honorary Associate Professor, respectively, both in the Department of Mechanical Engineering at the University of Hong Kong, Hong Kong. Since November 2002, he has joined the School of Automation at the Nanjing University of Science and Technology as a professor.

Dr. Xu was a recipient of the National Excellent Doctoral Dissertation Award in the year 2002 from the Ministry of Education of China. He obtained a grant from the National Science Foundation for Distinguished Young Scholars of P. R. China In the year 2006. He was awarded a Cheung Kong Professorship in the year 2008 from the Ministry of Education of China.

Dr. Xu is a member of the Editorial Boards of the \emph{Transactions of the Institute of Measurement and Control}, and the \emph{International Journal of Control, Automation, and Systems}. His current research interests include robust filtering and control, singular systems, time-delay systems, neural networks, multidimensional systems and nonlinear systems.
\end{IEEEbiography}

\end{document}